\newcommand\version{December 8, 2017}
\newtheorem{theorem}{Theorem}[section]
\newtheorem{proposition}[theorem]{Proposition}
\newtheorem{lemma}[theorem]{Lemma}
\newtheorem{corollary}[theorem]{Corollary}
\theoremstyle{definition}
\theoremstyle{remark}
\numberwithin{equation}{section}
\newcommand{\1}{\mathbbm{1}}
\newcommand{\C}{\mathbb{C}}
\renewcommand{\epsilon}{\varepsilon}
\newcommand{\N}{\mathbb{N}}
\renewcommand{\phi}{\varphi}
\newcommand{\R}{\mathbb{R}}
\DeclareMathOperator{\supp}{supp}
\DeclareMathOperator{\sgn}{sgn}
\begin{document}

\title[Two-dimensional Schr\"odinger operators on domains --- \version]{Bound on the number of negative eigenvalues of two-dimensional Schr\"odinger operators on domains}

\author{Rupert L. Frank}
\address[R. Frank]{Mathematisches Institut, Ludwig-Maximilans Universit\"at M\"unchen, Theresienstr. 39, 80333 M\"unchen, Germany, and Department of Mathematics, California Institute of Technology, Pasadena, CA 91125, USA}
\email{r.frank@lmu.de}

\author{Ari Laptev}
\address[A. Laptev]{Department of Mathematics, Imperial College London, 180 Queen's Gate, London SW7 2AZ, UK, and Institut Mittag--Leffler, Aurav\"agen 17, 182 60 Djursholm, Sweden}
\email{laptev@mittag-leffler.se}

\begin{abstract}
A fundamental result of Solomyak says that the number of negative eigenvalues of a Schr\"odinger operator on a two-dimensional domain is bounded from above by a constant times a certain Orlicz norm of the potential. Here we show that in the case of Dirichlet boundary conditions the constant in this bound can be chosen independently of the domain.
\end{abstract}

\dedicatory{Dedicated to the memory of M. Z. Solomyak}

\renewcommand{\thefootnote}{${}$} \footnotetext{\copyright\, 2017 by
  the authors. This paper may be reproduced, in its entirety, for
  non-commercial purposes.}

\maketitle

\section{Introduction and main result}

In this paper we are interested in an upper bound on the number of negative eigenvalues of a Schr\"odinger operator in two space dimensions. In order to state our main result, we introduce the functions 
$$
\mathcal A(t):=e^{|t|}-1-|t|
\qquad\text{and}\qquad
\mathcal B(t):= (1+|t|)\ln(1+|t|) -|t| \,.
$$
These functions are convex and Legendre transforms of each other. For a measurable set $\Omega\subset\R^2$ of positive and finite measure we denote by $L^{\mathcal B}(\Omega)$ the class of (almost everywhere equal) measurable functions on $\Omega$ for which
$$
\|g \|_{\mathcal B,\Omega} := \sup \left\{ \left| \int_\Omega fg \,dx \right| :\ f:\Omega\to\C \ \text{measurable such that}\ \int_\Omega \mathcal A(f) \,dx \leq |\Omega| \right\}
$$
is finite. Also $-\Delta_\Omega^D$ denotes the Dirichlet Laplacian on $\Omega$ and $N(H)$ denotes the number of negative eigenvalues, counting multiplicities, of a self-adjoint lower semi-bounded operator $H$.

Our main result is

\begin{theorem}\label{ch3:solomyak}
There is a constant $C$ such that for any open set $\Omega\subset\R^2$ of finite measure and any real $V\in L^\mathcal B(\Omega)$,
\begin{equation}
\label{eq:solomyak}
N(-\Delta_\Omega^D+V) \leq C \|V_-\|_{\mathcal B,\Omega} \,.
\end{equation}
\end{theorem}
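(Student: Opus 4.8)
The plan is to reduce matters to a single operator‑theoretic estimate for the Birman–Schwinger form, and to get the uniformity in $\Omega$ from a symmetrization bound on the Green's function. First I would set $W:=V_-$; since $-\Delta_\Omega^D+V\ge-\Delta_\Omega^D-W$ in the form sense, the variational principle gives $N(-\Delta_\Omega^D+V)\le N(-\Delta_\Omega^D-W)$, so it is enough to treat $W\ge0$. As $\Omega$ has finite measure, $-\Delta_\Omega^D$ has discrete spectrum with $\lambda_1:=\min\spec(-\Delta_\Omega^D)>0$, so $G_\Omega:=(-\Delta_\Omega^D)^{-1}$ is bounded with nonnegative integral kernel $G_\Omega(x,y)$, and the Birman–Schwinger principle yields
\begin{equation*}
 N(-\Delta_\Omega^D-W)=n_+\!\bigl(1,\ W^{1/2}G_\Omega W^{1/2}\bigr),
\end{equation*}
where $n_+(s,T)$ denotes the number of eigenvalues of the self‑adjoint quadratic form $T$ that exceed $s$. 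Everything is thereby reduced to estimating $n_+$ of this Birman–Schwinger form, and the task is to do it with a constant not depending on $\Omega$.

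The key to the uniformity will be a bound on $G_\Omega$ that depends on $\Omega$ only through $|\Omega|$. By Talenti's comparison (symmetrization) theorem --- applied after exhausting $\Omega$ from inside by smooth bounded subdomains --- the spherical decreasing rearrangement of $y\mapsto G_\Omega(x,y)$ is dominated pointwise by the Green's function $z\mapsto\tfrac1{2\pi}\ln(R/|z|)$ of the disk $B_R$ with $\pi R^2=|\Omega|$; equivalently,
\begin{equation*}
 \bigl|\{\,y\in\Omega:\ G_\Omega(x,y)>t\,\}\bigr|\le|\Omega|\,e^{-4\pi t}\qquad\text{for all }x\in\Omega,\ t>0.
\end{equation*}
Integrating this against $2\pi e^{2\pi t}\,dt$ gives $\int_\Omega\bigl(e^{2\pi G_\Omega(x,y)}-1\bigr)\,dy\le|\Omega|$, hence
\begin{equation*}
 \int_\Omega\mathcal A\bigl(2\pi\,G_\Omega(x,y)\bigr)\,dy\le|\Omega|\qquad\text{for all }x\in\Omega,
\end{equation*}
and the same with $x$ and $y$ exchanged. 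Thus the rows and columns of $G_\Omega$ lie in a ball of radius $1/(2\pi)$ of the Orlicz space $L^{\mathcal A}(\Omega)$ --- the space in Hölder duality with $L^{\mathcal B}(\Omega)$ for the normalization of $\|\cdot\|_{\mathcal B,\Omega}$ used above --- uniformly in $x$ and, crucially, in $\Omega$.

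It then remains to prove the following, and apply it with $T=G_\Omega$: \emph{there is a universal constant $C$ such that, whenever $T$ is an integral operator on $L^2(\Omega)$ with nonnegative kernel satisfying $\int_\Omega\mathcal A(2\pi\,T(x,y))\,dy\le|\Omega|$ and $\int_\Omega\mathcal A(2\pi\,T(x,y))\,dx\le|\Omega|$, one has $n_+(1,\,W^{1/2}TW^{1/2})\le C\,\|W\|_{\mathcal B,\Omega}$.} This is in essence Solomyak's lemma: I would decompose $T$ dyadically by the size of its kernel, $T=\sum_{k\ge0}t_k$ with $t_k$ supported on a level set of $T$ of measure $\lesssim|\Omega|\,2^{-c\,2^{k}}$, approximate each $t_k$ by a piecewise constant kernel on a partition of its support adapted to $W$ (the finite rank being the number of cells, the remainder a Hilbert–Schmidt term), and sum the contributions geometrically, the Hölder duality between $L^{\mathcal A}$ and $L^{\mathcal B}$ converting the kernel bound together with $\|W\|_{\mathcal B,\Omega}$ into the asserted estimate. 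I expect this last step to be the principal obstacle: every constant must be kept free of $\Omega$, and the slices $t_k$ are not positive operators, so their counting functions cannot simply be added --- one has to work instead with operator and Hilbert–Schmidt norms, which (unlike $n_+$) are monotone under pointwise domination of kernels, together with Solomyak's weak‑type machinery, rather than with a naive splitting. Granting this, $N(-\Delta_\Omega^D+V)\le N(-\Delta_\Omega^D-V_-)=n_+(1,V_-^{1/2}G_\Omega V_-^{1/2})\le C\,\|V_-\|_{\mathcal B,\Omega}$, which is the claim.
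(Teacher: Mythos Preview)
Your approach is genuinely different from the paper's, and the first half is attractive: reducing to the Birman--Schwinger form and then using Talenti symmetrization to obtain the uniform pointwise level-set bound $|\{y:\ G_\Omega(x,y)>t\}|\le|\Omega|e^{-4\pi t}$ is a clean way to package the $\Omega$-independence, and the deduction $\int_\Omega\mathcal A(2\pi G_\Omega(x,y))\,dy\le|\Omega|$ is correct. This replaces the paper's Proposition~\ref{trudinger} (the domain-uniform Trudinger-type inequality) by a kernel estimate. The paper instead proves Trudinger's inequality directly and combines it with a Besicovitch covering of $\Omega$ by cubes on which $\|V_-\|_{\mathcal B}$ is a fixed small number; the number of cubes is then bounded via superadditivity of the Orlicz norm.

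The gap is your ``Solomyak's lemma''. You state it as a self-contained operator-theoretic fact and then only sketch a proof; but the sketch does not go through as written, and you acknowledge as much. Two concrete problems: first, the Hilbert--Schmidt norms of the dyadic slices $W^{1/2}t_kW^{1/2}$ do \emph{not} decay in $k$ (a computation with your own level-set bound gives $\|W^{1/2}t_kW^{1/2}\|_{HS}^2\lesssim 2^k\|W\|_{\mathcal B,\Omega}^2$, since $\int_E W\lesssim \|W\|_{\mathcal B,\Omega}/\ln(|\Omega|/|E|)$ only gains one factor of $2^{-k}$ per integration while the kernel contributes $2^{2k}$), so ``sum geometrically'' needs more than the HS remainder. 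Second, your ``piecewise constant kernel on a partition adapted to $W$'' is exactly the step that carries the real content: choosing such a partition with controlled cardinality and with a uniform bound on the remainder \emph{is} the covering argument of the paper (Proposition~\ref{coveringorlicz} together with Proposition~\ref{trudinger}), and no statement in Solomyak's paper gives it for free in the domain-independent form you need. In short, the Talenti step relocates the $\Omega$-uniformity from the Sobolev side to the kernel side, but the hard combinatorial/approximation step remains, and your outline does not supply it.
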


Here we use the notation $V(x)_-=\max\{0,-V(x)\}$ for the negative part. 

The main point of this paper is that the constant $C$ in \eqref{eq:solomyak} can be chosen independently of $\Omega$. In a fundamental paper \cite{So} Solomyak had shown that
\begin{equation}
\label{eq:solomyakn}
N(-\Delta_\Omega^N+V) \leq 1+C_\Omega \|V_-\|_{\mathcal B,\Omega}
\end{equation}
for the Neumann Laplacian $-\Delta_\Omega^N$ and for bounded, connected open sets $\Omega\subset\R^2$ with the extension property. (This is essentially Theorem 4' in \cite{So}.) The constant $C_\Omega$ in \eqref{eq:solomyakn} depends on $\Omega$. From this bound and ideas in its proof it is easy to also obtain \eqref{eq:solomyak} for bounded open sets $\Omega\subset\R^2$, but again with a constant that depends on $\Omega$. The proof that this constant can, in fact, be chosen independently of $\Omega$ needs additional ingredients, which we provide in this paper. We also mention that conversely, it is easy to deduce \eqref{eq:solomyakn} from \eqref{eq:solomyak}, but then a dependence of $\Omega$ enters through the use of an extension operator.

A standard application of Theorem \ref{ch3:solomyak}, even with a domain dependent constant, yields the validity of Weyl asymptotics in the strong coupling limit for rough potentials. It is crucial here that the right side of \eqref{eq:solomyak} is homogeneous of degree one with respect to $V$. This is a distinguishing feature of \eqref{eq:solomyak} and \eqref{eq:solomyakn} compared to other bounds discussed below.

\begin{corollary}\label{weyl}
If $\Omega\subset\R^2$ is an open set of finite measure and $V\in L^\mathcal B(\Omega)$ is real, then
$$
\lim_{\alpha\to\infty} \alpha^{-1} N(-\Delta_\Omega^D+\alpha V) = \frac{1}{4\pi} \int_\Omega V(x)_-\,dx \,.
$$
\end{corollary}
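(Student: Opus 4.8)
\medskip
\noindent\textit{Proof plan for Corollary~\ref{weyl}.}\quad
The plan is to combine the classical two-dimensional Weyl law for potentials in $C_c(\Omega)$ with an approximation argument in which the bound \eqref{eq:solomyak} controls every error term. Two features make this work: the right-hand side of \eqref{eq:solomyak} is homogeneous of degree one in $V$, and $\mathcal B$ satisfies the $\Delta_2$-condition. On the finite-measure set $\Omega$ the latter implies that $C_c(\Omega)$ is dense in $L^{\mathcal B}(\Omega)$ (standard Orlicz space theory), and also that $\|g\|_{L^1(\Omega)}\le c\,\|g\|_{\mathcal B,\Omega}$ for a universal $c>0$ (test the defining supremum with $f$ a fixed constant multiple of $\sgn g$). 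I will use freely that $N(tH)=N(H)$ for $t>0$, that $N(H_1)\ge N(H_2)$ whenever $H_1\le H_2$ as forms, Dirichlet bracketing, and the splitting bound $N(A+B)\le N(A)+N(B)$ for lower semibounded forms.

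\smallskip
\noindent The first step is to reduce to $W\in C_c(\Omega)$. Fix $\epsilon\in(0,1)$. \emph{Upper bound:} for any real $W$ the pointwise inequality $V\ge W-(V-W)_-$ gives $-\Delta_\Omega^D+\alpha V\ge[(1-\epsilon)(-\Delta_\Omega^D)+\alpha W]+[\epsilon(-\Delta_\Omega^D)-\alpha(V-W)_-]$, so by the splitting bound, rescaling, and \eqref{eq:solomyak},
\[
N(-\Delta_\Omega^D+\alpha V)\ \le\ N\!\bigl(-\Delta_\Omega^D+\tfrac{\alpha}{1-\epsilon}\,W\bigr)+\tfrac{C\alpha}{\epsilon}\,\|(V-W)_-\|_{\mathcal B,\Omega}.
\]
\emph{Lower bound:} from the exact decomposition $-\Delta_\Omega^D-\alpha W=[(1-\epsilon)(-\Delta_\Omega^D)+\alpha V]+[\epsilon(-\Delta_\Omega^D)-\alpha(V+W)]$, the splitting bound, \eqref{eq:solomyak} applied to the second bracket, the pointwise inequality $(V+W)_-\le|V_--W|$, and rescaling $\alpha$,
\[
N(-\Delta_\Omega^D+\alpha V)\ \ge\ N\!\bigl(-\Delta_\Omega^D-(1-\epsilon)\alpha W\bigr)-\tfrac{C(1-\epsilon)\alpha}{\epsilon}\,\|V_--W\|_{\mathcal B,\Omega}.
\]
Now choose, for the upper bound, $W\in C_c(\Omega)$ with $\|V-W\|_{\mathcal B,\Omega}$ small (then $\|(V-W)_-\|_{\mathcal B,\Omega}\le\|V-W\|_{\mathcal B,\Omega}$ is small and $\int_\Omega W_-\to\int_\Omega V_-$), and, for the lower bound, $W\in C_c(\Omega)$ with $W\ge0$ and $\|V_--W\|_{\mathcal B,\Omega}$ small (then $\int_\Omega W\to\int_\Omega V_-$); both choices are possible by density. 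Dividing the two displays by $\alpha$ and improving the approximation kills the error terms, so it remains to prove the corollary for $W\in C_c(\Omega)$ and then send $\epsilon\to0$.

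\smallskip
\noindent For $W\in C_c(\Omega)$ I would establish the classical Weyl law in both directions by hand. For the upper bound, extension by zero embeds $H^1_0(\Omega)$ isometrically into $H^1(\R^2)$ for the gradient form, whence $N(-\Delta_\Omega^D+\beta W)\le N(-\Delta_{\R^2}+\beta\widetilde W)$ with $\widetilde W\in C_c(\R^2)$ the zero extension; the classical Weyl asymptotics on $\R^2$ give $\beta^{-1}N(-\Delta_{\R^2}+\beta\widetilde W)\to(4\pi)^{-1}\int_{\R^2}\widetilde W_-=(4\pi)^{-1}\int_\Omega W_-$. For the lower bound, cover $\supp W$ by finitely many closed squares $Q_j\Subset\Omega$ with pairwise disjoint interiors, use Dirichlet bracketing to get $N(-\Delta_\Omega^D+\beta W)\ge\sum_j N(-\Delta_{Q_j}^D+\beta W|_{Q_j})$, and on each square replace the continuous function $W|_{Q_j}$ by its piecewise-constant approximants on a fine subdivision (which converge uniformly) to obtain $\beta^{-1}N(-\Delta_{Q_j}^D+\beta W|_{Q_j})\to(4\pi)^{-1}\int_{Q_j}W_-$; summing over $j$ and refining the squares yields $\liminf_\beta\beta^{-1}N(-\Delta_\Omega^D+\beta W)\ge(4\pi)^{-1}\int_\Omega W_-$. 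Together the two halves give the case $W\in C_c(\Omega)$ — and, applied to the potential $-W$, the limit needed in the lower-bound reduction — hence the corollary.

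\smallskip
\noindent I do not expect a genuine obstacle here: this is the standard strong-coupling argument. The only point requiring care is that the approximating potentials must control the errors \emph{in the $\mathcal B$-norm}, because it is precisely the degree-one homogeneity of \eqref{eq:solomyak} that makes $C\alpha\epsilon^{-1}\|\cdot\|_{\mathcal B,\Omega}$ negligible after division by $\alpha$ — the feature of \eqref{eq:solomyak} emphasised before the statement, and the reason a bound that is not homogeneous of degree one would not suffice. (The domain-independence of the constant in Theorem~\ref{ch3:solomyak} plays no role in the corollary; any bound of the shape \eqref{eq:solomyak} would do.)
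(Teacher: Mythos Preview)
Your proposal is correct and follows essentially the same approach as the paper: approximate $V$ by $C_c$ potentials, use Theorem~\ref{ch3:solomyak} (and its degree-one homogeneity) to control the remainder, and invoke the classical Weyl law for continuous compactly supported potentials. The only cosmetic differences are that the paper uses a single sequence $W_n\to V$ for both bounds rather than choosing separate approximants (with $W\ge0$ approximating $V_-$) for the lower bound, and the paper simply cites the Weyl law for $C_c$ potentials rather than sketching its proof via extension to $\R^2$ and Dirichlet bracketing as you do.
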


The basic strategy in our proof of Theorem \ref{ch3:solomyak} is similar to that of Solomyak's proof of \eqref{eq:solomyakn} which, in turn, follows the strategy of Rozenblum's proof of the CLR bound \cite{Ro1,Ro2}. The set $\Omega$ is covered by cubes which are chosen in such a way that the Schr\"odinger operator on each cube with Neumann boundary conditions has at most one negative eigenvalue. Therefore the number of negative eigenvalues is bounded by the number of cubes and it remains to bound the latter number. In contrast to earlier works Rozenblum used overlapping cubes and the selection of the cubes proceeds by the Besicovich covering lemma. In order to guarantee that the Schr\"odinger operator on each cube has at most one negative eigenvalue, the cubes are chosen such that the norm of $V$ on each cube is equal to a given small constant. This construction is considerably more difficult in the two-dimensional case (where one deals with an Orlicz norm) than in the higher-dimensional case (where one deals with the $L^{d/2}$-norm). In fact, the choice of the (non-standard) norm $\|\cdot\|_{\mathcal B,\Omega}$, which has suitable superadditivity properties, was one of the key insights in Solomyak's work.

The new difficulty that we have to face in this work is that intersections of cubes with $\Omega$ can have essentially an arbitrary shape and that our constants have to be in a certain sense uniform with respect to this shape. In contrast, Solomyak needed only to consider intersections of cubes with each other, that is, rectangles, and the necessary uniformity follows in a rather straightforward manner. In our case we obtain the uniformity by carefully reviewing the proof of the Trudinger inequality. 

\medskip

We end this introduction by placing our result in the context of eigenvalue bounds for Schr\"odinger operators. As is well-known, the two-dimensional case is a borderline case and is still not as well understood as the case of three and higher dimensions and the one-dimensional case. Recently, there have been several results on the two-dimensional case \cite{KhMaWu,MoVa,Sh,GrNa,LaSo,LaSo2}. Solomyak's pioneering paper \cite{So} had a profound influence on these developments and we would like to dedicate our results here, which are also a variation on the theme in \cite{So}, to his memory.

In order to understand the particularity of the two-dimensional case, we recall that  in dimensions $d\geq 3$ the number $N(-\Delta+V)$ of negative eigenvalues of the Schr\"odinger operator $-\Delta +V$ in $L^2(\R^d)$ is bounded by the Cwikel--Lieb--Rozenblum inequality as
\begin{equation}
\label{eq:clr}
N(-\Delta+V) \leq C_d \int_{\R^d} V(x)_-^{d/2}\,dx \,,
\end{equation}
where $C_d$ is a constant depending only on $d$. This inequality should be compared with the Weyl asymptotics
\begin{equation}
\label{eq:weyl}
\lim_{\alpha\to\infty} \alpha^{-d/2} N(-\Delta +\alpha V) = \frac{|\{ \xi\in\R^d:\ |\xi|\leq 1\}|}{(2\pi)^d} \int_{\R^d} V(x)_-^{d/2}\,dx \,.
\end{equation}
These asymptotics are initially proved for continuous, compactly supported $V$ and then extended, using \eqref{eq:clr}, to any $V\in L^{d/2}(\R^d)$. Moreover, it is easy to deduce from \eqref{eq:clr} that if for $V\leq 0$ one has $\limsup_{\alpha\to\infty} \alpha^{-d/2} N(-\Delta+\alpha V)<\infty$, then $V\in L^{d/2}(\R^d)$.

The analogue of \eqref{eq:clr} is not true in dimension two. Indeed, in two dimensions for any potential $V\not\equiv 0$ with $\int_{\R^2} V(x)\,dx\leq 0$ the operator $-\Delta+V$ has a negative eigenvalue. However, not even a modified bound with the right side in \eqref{eq:clr} replaced by $C(1+ \int_{\R^2} V(x)_-\,dx)$ can hold, since a more subtle failure of \eqref{eq:clr} was discovered in \cite{BiLa}. Namely, for any $q>1$ there is a potential $V\in L^1(\R^2)$ such that $\lim_{\alpha\to\infty} \alpha^{-q} N(-\Delta+\alpha V)$ exists and is finite and positive. Therefore no bound like \eqref{eq:clr}, which would imply that $N(-\Delta+\alpha V)$ grows linearly with $\alpha$, can hold. In fact, the modified asymptotics hold for any long-range potential which behaves like $- |x|^{-2} (\ln |x|)^{-2} (\ln\ln |x|)^{-1/q}$ as $|x|\to\infty$. Moreover, if $q=1$ in these examples, then $\lim_{\alpha\to\infty} \alpha^{-1} N(-\Delta+\alpha V)$ exists, but is different from the right side of \eqref{eq:weyl}. We also mention that not only the slow decay at infinity can give rise to modified asymptotics but also strong (but integrable) local singularities. This can be understood via the conformal invariance of the problem. More concretely, the same modified asymptotics hold for any potential behaving like $- |x|^{-2} |\ln |x||^{-2} (\ln|\ln |x||)^{-1/q}$ as $|x|\to 0$. Note that the latter function belongs to $L^{\mathcal B}(\Omega)$ for an open set $\Omega$ containing $0$ if and only if $q<1$, so the modified asymptotics for $q\geq 1$ do not contradict our Corollary \ref{weyl}. We also note that these examples with local singularities show that even for the number of eigenvalues of $-\Delta+V$ below some fixed number $E<0$ there cannot be a bound of the form $C_E(1+ \int_{\R^2} V(x)_-\,dx)$.

This discussion raises the question of characterizing all $V\in L^1(\R^2)$ (or all $0\geq V\in L^2(\R^2)$) such that either $\limsup_{\alpha\to\infty} \alpha^{-1} N(-\Delta+\alpha V)<\infty$ or such that \eqref{eq:weyl} holds. This problem was solved in the radial case in \cite{LaSo}, but is still open in general. The eigenvalue bounds in \cite{So,KhMaWu,Sh,GrNa,LaSo2} can be understood as sufficient conditions for an asymptotically linear bound. Other, faster growing bounds can be found, for instance, in \cite{MoVa,St,We}.

\medskip

This paper is organized as follows. In Section \ref{sec:strategy} we explain the strategy of the proof of Theorem \ref{ch3:solomyak} and Corollary \ref{weyl} and reduce it to two main ingredients, namely a Sobolev-type inequality and a covering argument, which will be discussed in Sections \ref{sec:trudinger} and \ref{sec:covering}, respectively. We present all facts about Orlicz spaces which are relevant for us in Appendix \ref{sec:orlicz} and include a proof of the Besicovich theorem in Appendix \ref{sec:besicovich}.

\subsection*{Acknowledgements.} The authors are grateful to Timo Weidl for extensive discussions related to this material. They acknowledge partial support by the U.S. National Science Foundation through grant DMS-1363432 (R.L.F.) and by a grant of the Russian Federation Government under the supervision of a leading scientist at the Siberian Federal University, 14.Y26.31.0006, (A.L.).


\section{Strategy of the proof}\label{sec:strategy}

The proof of Theorem \ref{ch3:solomyak} is based on two ingredients, namely Sobolev-type inequalities and a covering argument. We present these ingredients in this section and then explain how to derive Theorem \ref{ch3:solomyak} from them.

\begin{proposition}\label{trudinger}
There is a constant $S_2>0$ such that for any open set $\Omega\subset\R^2$ of finite measure and any $u\in H^1_0(\Omega)$,
\begin{equation}
\label{eq:trudingerh10}
\int_\Omega \mathcal A( S_2 |u|^2/\|\nabla u\|^2 )\,dx \leq |\Omega| \,.
\end{equation}
Moreover, there is a constant $S_2'>0$ such that for any open set $\Omega\subset\R^2$, any open cube $Q\supset\Omega$ and any $u\in H^1(\Omega)$ which vanishes near $Q\cap\partial\Omega$ and satisfies $\int_\Omega u\,dx =0$,
\begin{equation}
\label{eq:pointrusolo}
\int_\Omega \mathcal A( S_2' |u|^2/\|\nabla u\|^2 )\,dx \leq |\Omega| \,.
\end{equation}
\end{proposition}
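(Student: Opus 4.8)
The plan is to derive both inequalities from the classical Moser–Trudinger inequality on a fixed reference domain, tracking constants carefully to achieve the claimed uniformity in $\Omega$. The key point is that the functional $u\mapsto \int_\Omega \mathcal A(S|u|^2/\|\nabla u\|^2)\,dx$ is invariant under the scaling $u(x)\mapsto u(\lambda x)$ (this rescales $\Omega$ but leaves both the integral and $|\Omega|$ transformed the same way, in fact it leaves the inequality \eqref{eq:trudingerh10} unchanged since both sides scale by $\lambda^{-2}$), so there is no loss in normalizing, say, $|\Omega|$ or the relevant cube. For \eqref{eq:trudingerh10} I would argue as follows. By homogeneity we may assume $\|\nabla u\|=1$. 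The Moser–Trudinger inequality states that there is an absolute constant $\kappa>0$ such that for $u\in H^1_0(\Omega)$ with $\|\nabla u\|\le 1$ one has $\int_\Omega (e^{\kappa |u|^2}-1)\,dx \le A|\Omega|$ for an absolute $A$; the crucial feature is that $\kappa$ and $A$ do not depend on $\Omega$ (extend $u$ by zero to all of $\R^2$, or better to a large ball, and apply the sharp inequality of Moser on balls, monitoring the dependence on the radius via scaling). Since $\mathcal A(t)=e^{|t|}-1-|t|\le e^{|t|}-1$, this already gives $\int_\Omega \mathcal A(\kappa|u|^2)\,dx\le A|\Omega|$. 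If $A>1$ I would then trade a bit of the constant for a bit of the bound: using convexity of $\mathcal A$ and $\mathcal A(0)=0$, one has $\mathcal A(\theta t)\le \theta\mathcal A(t)$ for $\theta\in[0,1]$, so choosing $S_2=\kappa/A$ yields $\int_\Omega\mathcal A(S_2|u|^2)\,dx\le A^{-1}\int_\Omega\mathcal A(\kappa|u|^2)\,dx\le|\Omega|$, which is \eqref{eq:trudingerh10}.

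For the second inequality \eqref{eq:pointrusolo} the extra ingredients are the mean-zero condition and the fact that $u$ need only vanish on $Q\cap\partial\Omega$, not on all of $\partial\Omega$ — so $u$ does not extend by zero to a function on the cube $Q$. Here the natural strategy is to extend $u$ to all of $Q$ by reflection across the flat faces of $\partial Q$ (or, more robustly, to extend across $Q\cap\partial\Omega$ — but that is not flat). The cleaner route: since $u$ vanishes near $Q\cap\partial\Omega$, the function $u\1_\Omega$ lies in $H^1(Q)$, vanishes near $Q\cap\partial\Omega$ but not necessarily on $\partial Q$, and has mean zero over $\Omega$ (not over $Q$). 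One then applies the Moser–Trudinger inequality in the Neumann form on the cube $Q$: for $v\in H^1(Q)$ with $\int_Q v\,dx=0$ and $\|\nabla v\|_{L^2(Q)}\le 1$ there is an absolute bound $\int_Q(e^{\kappa' v^2}-1)\,dx\le A'|Q|$, with $\kappa',A'$ independent of the side length of $Q$ by scaling. The discrepancy between $\int_\Omega u\,dx=0$ and $\int_Q u\1_\Omega\,dx$ being possibly nonzero is handled by subtracting the average $\bar u:=|Q|^{-1}\int_Q u\1_\Omega\,dx$ and noting $|\bar u|$ is controlled by $\|\nabla u\|$ times a Poincaré-type constant for $Q$ acting on the mean-zero-over-$\Omega$ function — but this constant is \emph{not} uniform if $|\Omega|\ll|Q|$, which is the crux of the difficulty.

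Therefore the main obstacle, as the authors themselves flag in the introduction, is precisely the uniformity of \eqref{eq:pointrusolo} over the shape of $Q\cap\partial\Omega$, equivalently over the shape of $\Omega$ inside $Q$. The resolution cannot go through a naive Poincaré inequality on $Q$; instead one must revisit the \emph{proof} of the Trudinger inequality (via the symmetric-decreasing rearrangement and the $O'Neil$/Moser lemma on the one-dimensional reduction, or via the heat semigroup) and check that the only place $\Omega$ enters is through $|\Omega|$ and through the normalization of $\nabla u$, both of which are scale-covariant. Concretely, I would use the representation $u(x)-\bar u = \int_\Omega G(x,y)\,(-\Delta u)(y)\,dy$-type bounds replaced by the elementary pointwise estimate behind Trudinger's inequality: for a.e.\ $x$, $|u(x)-\text{(its average)}|\lesssim \int |x-y|^{1-2}\,|\nabla u(y)|\,dy$, i.e.\ a Riesz potential $I_1$ applied to $|\nabla u|\1_\Omega$, and then invoke the sharp exponential integrability of $I_1$ acting on $L^2$ functions supported in a set of measure $|\Omega|$ — a statement that is manifestly uniform in the geometry of that set and depends only on its measure. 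Replacing the mean-value correction by the appropriate average (over $\Omega$, against which $u$ is mean-zero) makes the required inequality $\int_\Omega\mathcal A(S_2'|u|^2/\|\nabla u\|^2)\,dx\le|\Omega|$ follow with $S_2'$ absolute, after again spending a factor of the constant $A'$ via the convexity inequality $\mathcal A(\theta t)\le\theta\mathcal A(t)$ as in the first part. The bookkeeping of the Riesz-potential constant (essentially $\|I_1\|$ from $L^2$ of a set of measure $m$ into exponential Orlicz) is the one genuinely delicate computation; everything else is a rescaling argument plus convexity.
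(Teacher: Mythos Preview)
Your first part is fine and close in spirit to the paper's argument (the paper derives the $q$-dependent Sobolev constant explicitly and sums the series for $\mathcal A$, but quoting Moser--Trudinger and then using $\mathcal A(\theta t)\le\theta\mathcal A(t)$ works equally well). For the second part there is first a confusion and then a gap. The confusion: $\int_Q u\1_\Omega\,dx = \int_\Omega u\,dx = 0$ exactly, since $u\1_\Omega$ vanishes on $Q\setminus\Omega$; so there is no mean-value discrepancy to correct, and after extension by zero you have $u\in H^1(Q)$ with $\int_Q u=0$, so Neumann Moser--Trudinger on $Q$ applies directly. The genuine obstacle --- which you do eventually name --- is that this puts $A'|Q|$ on the right side rather than $A'|\Omega|$, and no Poincar\'e-type correction changes that.

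The paper resolves this by a reflection trick that is considerably simpler than the Riesz-potential route you sketch. After extending $u$ by zero to $Q$, reflect evenly across the faces of $Q$ to the concentric tripled cube $\tilde Q$, then multiply by a scale-invariant cutoff $\eta\in C_0^\infty(\tilde Q)$ with $\eta\equiv 1$ on $Q$. The lower-order term coming from $\nabla\eta$ is controlled by the ordinary Poincar\'e inequality on $Q$ (valid since $\int_Q u=0$), yielding $\|\nabla(\eta Eu)\|^2\le c\|\nabla u\|^2$ with an absolute $c$. The key observation is that $\eta Eu$ is supported in the union $\tilde\Omega$ of the nine reflected copies of $\Omega$, and $|\tilde\Omega|=9|\Omega|$. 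Hence $\eta Eu\in H^1_0(\tilde\Omega)$, and the \emph{first} part of the proposition, applied on $\tilde\Omega$, produces the right side $9C_{c\alpha}|\Omega|$ rather than $9C_{c\alpha}|Q|$. Your Riesz-potential alternative could perhaps be pushed through --- the sub-representation formula on the convex $Q$ does give $|u(x)|\le C\,I_1(|\nabla u|\1_\Omega)(x)$ with an absolute constant, since $u_Q=0$ --- but the ``one genuinely delicate computation'' you defer, namely exponential integrability of $I_1 g$ over $\supp g$ with constants depending only on $|\supp g|$, is essentially a full Adams--Moser--Trudinger argument from scratch (handling the logarithmic criticality of $I_1$ in two dimensions, typically via rearrangement to the radial case), and you have not carried it out. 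The reflection trick bypasses all of this by reducing directly to the $H^1_0$ case already established.
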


The crucial point for us is that the constants $S_2$ and $S_2'$ do not depend on $\Omega$. Note that we can take $\Omega=Q$ in the second part of the proposition and then \eqref{eq:pointrusolo} becomes
\begin{equation}
\label{eq:pointrusolocube}
\int_Q \mathcal A( S_2' |u|^2/\|\nabla u\|^2 )\,dx \leq |Q|
\end{equation}
for any $u\in H^1(Q)$ with $\int_Q u\,dx =0$. This inequality, however, is weaker than the second part of the proposition. Indeed, while it is true that functions $u\in H^1(\Omega)$ which vanishes near $Q\cap\partial\Omega$ and satisfy $\int_\Omega u\,dx =0$ can be extended by zero to functions in $H^1(Q)$ with mean value zero, applying \eqref{eq:pointrusolocube} to this extension gives \eqref{eq:pointrusolo} only with $|Q|$ on the right side and not with $|\Omega|$. This would not be sufficient for our purposes. The proof of Proposition \ref{trudinger} will be discussed in Section \ref{sec:trudinger}.

The second ingredient in the proof of Theorem \ref{ch3:solomyak} is the following covering result. By a \emph{cube} we always mean an open cube with edges parallel to the coordinate axes, and by a \emph{covering} of a set $K\subset\R^2$ by cubes $Q_1,\ldots, Q_M$ we mean that $K\subset\bigcup_j Q_j$. The \emph{multiplicity} of such a covering is $\sup_{x\in \R^2}\#\{j: \ x\in Q_j \}$.

\begin{proposition}\label{coveringorlicz}
Let $\Omega\subset\R^2$ be an open set of finite measure and let $0\leq W\in L^{\mathcal B}(\Omega)$ with compact support. Then for any $0< A \leq \|W\|_{\mathcal B,\Omega}$ there is a covering of $\Omega$ by open cubes $Q_1, \ldots, Q_M\subset\R^2$ of multiplicity at most $4$ such that 
\begin{equation}\label{eq:coveringorlicz}
\| W \|_{\mathcal B,Q_m\cap\Omega} = A \qquad\text{for all}\ 1\leq m\leq M \,.
\end{equation}
Moreover,
\begin{equation}
\label{eq:coveringorlicznumber}
M \leq 17 A^{-1} \|W\|_{\mathcal B,\Omega} \,.
\end{equation}
\end{proposition}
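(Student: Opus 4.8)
The plan is to attach to each point $x\in\Omega$ a cube centered at $x$ on which the $\mathcal B$-norm of $W$ equals $A$, to extract from this family a covering of $\Omega$ of bounded multiplicity by the Besicovich theorem of Appendix~\ref{sec:besicovich}, and to count the cubes of that covering using the superadditivity of the $\mathcal B$-norm recorded in Appendix~\ref{sec:orlicz}. For $x\in\R^2$ and $r>0$ write $Q(x,r)$ for the open cube of side $r$ centered at $x$, and put $\phi_x(r):=\|W\|_{\mathcal B,Q(x,r)\cap\Omega}$. Then $\phi_x$ is non-decreasing: any test function admissible for the norm over $Q(x,r)\cap\Omega$ remains admissible, after extension by zero, for the norm over $Q(x,r')\cap\Omega$ when $r'>r$, since the side condition in the definition of the norm only becomes easier to satisfy on a larger set. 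By the absolute continuity of the norm on $L^{\mathcal B}$ (which holds because $\mathcal B$ is a $\Delta_2$-function) we have $\phi_x(r)\to0$ as $r\to0$; and since $W$ has compact support and $|Q(x,r)\cap\Omega|\uparrow|\Omega|$, we have $\phi_x(r)\to\|W\|_{\mathcal B,\Omega}$ as $r\to\infty$, which is $\geq A$ by hypothesis. The same absolute continuity, together with the additivity-type properties of the norm under splitting a set into disjoint pieces, shows that $\phi_x$ is continuous, because the symmetric difference of $Q(x,r)$ and $Q(x,r')$ has measure tending to $0$ as $r'\to r$. Hence by the intermediate value theorem there is $r_x>0$ with $\phi_x(r_x)=A$; set $Q_x:=Q(x,r_x)$, which necessarily meets $\supp W$.

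The cubes $\{Q_x:x\in\Omega\}$ cover $\Omega$ and each is centered at a point of $\Omega$, so the planar form of the Besicovich theorem in Appendix~\ref{sec:besicovich} produces a countable subfamily $Q_1,\dots,Q_M$ (with $M\leq\infty$ a priori) which still covers $\Omega$ and has multiplicity at most $4$. Each of these cubes satisfies $\|W\|_{\mathcal B,Q_m\cap\Omega}=A$ by construction. To bound $M$, one uses that a covering of multiplicity at most $4$ by cubes decomposes into at most $17$ subfamilies of pairwise disjoint cubes — a combinatorial fact that comes out of the Besicovich selection by controlling the number of selected cubes of comparable size meeting a fixed one, and this is where the constant $17$ enters. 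On each such subfamily $\mathcal F$, superadditivity and monotonicity of $E\mapsto\|W\|_{\mathcal B,E\cap\Omega}$ give $\sum_{m\in\mathcal F}\|W\|_{\mathcal B,Q_m\cap\Omega}\leq\|W\|_{\mathcal B,\bigl(\bigcup_{m\in\mathcal F}Q_m\bigr)\cap\Omega}\leq\|W\|_{\mathcal B,\Omega}$. Summing over the at most $17$ subfamilies yields $MA=\sum_{m=1}^M\|W\|_{\mathcal B,Q_m\cap\Omega}\leq 17\,\|W\|_{\mathcal B,\Omega}$, which is \eqref{eq:coveringorlicznumber} and in particular shows $M<\infty$ since $W\in L^{\mathcal B}(\Omega)$.

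The two inputs that do the real work are taken from the appendices: the superadditivity of the $\mathcal B$-norm — Solomyak's key observation — which powers the count, and the absolute continuity of that norm, which powers the construction of the cubes $Q_x$. Granting these, the only genuinely delicate point in the argument above is the continuity of $r\mapsto\phi_x(r)$, needed so as to land exactly on the value $A$ rather than merely straddle it; the rest is Besicovich bookkeeping, whose only real demand is to keep the geometric constants explicit enough to reach the multiplicity bound $4$ and the factor $17$.
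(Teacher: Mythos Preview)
Your strategy matches the paper's, but there is a real gap at the Besicovich step. The version you invoke (Proposition~\ref{ch3:besicovitch}) carries two hypotheses: the set $K$ of centers is compact, and the side-length function $x\mapsto l(x)$ is upper semi-continuous on $K$. You check neither, and in fact dismiss the whole step as ``Besicovich bookkeeping.'' The compactness point is minor (pass to $\overline\Omega$ as the paper does). The upper semi-continuity, however, is precisely the delicate point you thought you had already isolated: an arbitrary solution $r_x$ of $\phi_x(r_x)=A$ need \emph{not} depend upper semi-continuously on $x$, since $\phi_x$ is merely non-decreasing and may be constant equal to $A$ on a whole interval, so nearby centers can receive very different radii. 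The paper resolves this in Lemma~\ref{orlicznormcont} by taking the \emph{maximal} $r_x$ with $\phi_x(r_x)=A$ and then proving upper semi-continuity of $x\mapsto r_x$ from Lemma~\ref{orliczcont} (the same set-continuity lemma that underlies your continuity of $r\mapsto\phi_x(r)$). Without this, the greedy selection in the proof of Proposition~\ref{ch3:besicovitch}, which at each stage extracts a center where $l$ attains its maximum on the uncovered set, cannot even begin; nor can you fall back on the textbook form of Besicovich with bounded radii, because $r_x\to\infty$ as $x$ recedes from $\supp W$ inside a possibly unbounded~$\Omega$.
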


The proof of this proposition will be discussed in Section \ref{sec:covering}.

Having introduced our tools we are now in position to give the

\begin{proof}[Proof of Theorem \ref{ch3:solomyak}]
We first assume that $\supp V_-$ is bounded. We denote by $S_2$ and $S_2'$ the constants from Proposition \ref{trudinger}. If $\|V_-\|_{\mathcal B,\Omega}\leq S_2$, then we can use the first part of that proposition as well as the definition of $\|\cdot\|_{\mathcal B,\Omega}$ to bound for any $u\in H^1_0(\Omega)$
\begin{align*}
\int_\Omega \left( |\nabla u|^2 + V |u|^2 \right)dx & \geq \|\nabla u\|^2 \left( 1 - S_2^{-1} \int_\Omega V_- \frac{S_2 |u|^2}{\|\nabla u\|^2} \,dx \right) \\
& \geq \|\nabla u\|^2 \left( 1 - S_2^{-1} \|V_-\|_{\mathcal B,\Omega} \right) \geq 0 \,.
\end{align*}
Thus, $N(-\Delta_\Omega^D-V)=0$ and the theorem holds in this case.

We now assume that $\|V_-\|_{\mathcal B,\Omega}> S_2$. We apply Proposition \ref{coveringorlicz} with $W=V_-$ and $A= \min\{4^{-1} S_2',S_2\}$. We obtain a covering of $\Omega$ by open cubes $Q_1,\ldots,Q_M$ of multiplicity at most $4$ such that
$$
\| V_- \|_{\mathcal B,Q_m\cap\Omega} \leq 4^{-1} S_2'
\qquad\text{for all}\ m=1,\ldots, M
$$
and
$$
M \leq 17 \max\left\{ 4 (S_2')^{-1},S_2^{-1} \right\} \| V_- \|_{\mathcal B,\Omega} \,.
$$
Thus, for any $u\in H^1_0(\Omega)$ which satisfies the orthogonality conditions
$$
\int_{Q_m} u\,dx = 0
\qquad\text{for all}\ m=1,\ldots, M
$$
we can bound, using the second part of Proposition \ref{trudinger},
\begin{align*}
\int_\Omega \left( |\nabla u|^2 + V |u|^2 \right)dx & \geq \sum_{m=1}^M \int_{Q_m} \left( \frac14 |\nabla u|^2 - V_- |u|^2 \right)dx \\
& = \frac14 \sum_{m=1}^M \| \nabla u \|^2_{L^2(Q_m)} \left( 1 - \frac{4}{S_2'} \int_{Q_m\cap\Omega} V_- \frac{S_2' |u|^2}{\|\nabla u\|^2_{L^2(Q_m)}} \,dx \right) \\
& \geq \frac14 \sum_{m=1}^M \| \nabla u \|^2_{L^2(Q_m)} \left( 1 - \frac{4}{S_2'} \| V_- \|_{\mathcal B,Q_m\cap\Omega} \right)\geq 0 \,. 
\end{align*}
By the variational principle, this implies that $N(-\Delta_\Omega^D-V)\leq M$, and the upper bound on $M$ from the covering result proves the theorem in the case where $\supp V_-$ is bounded.

In the general case, we fix $\epsilon>0$ and, by a similar argument as in Lemma \ref{orliczbounded}, we choose $R>0$ such that $W:= \1_{\{|x|<R\}} V_-$ satisfies $\| W-V_- \|_{\mathcal B,\Omega}\leq \epsilon S_2$. Then for any $u\in H^1_0(\Omega)$ we have
\begin{align*}
\int_\Omega \left( |\nabla u|^2 + V |u|^2 \right)dx & \geq (1-\epsilon) \int_\Omega \left( |\nabla u|^2 - (1-\epsilon)^{-1}W |u|^2 \right) dx \\
& \qquad + \epsilon \int_\Omega \left( |\nabla u|^2 - \epsilon^{-1} (V_--W)|u|^2\right)dx \\
& \geq (1-\epsilon) \int_\Omega \left( |\nabla u|^2 - (1-\epsilon)^{-1}W |u|^2 \right) dx \\
& \qquad + \epsilon \int_\Omega |\nabla u|^2\,dx \left( 1- (\epsilon S_2)^{-1} \|V_- - W \|_{\mathcal B,\Omega} \right) \\
& \geq (1-\epsilon) \int_\Omega \left( |\nabla u|^2 - (1-\epsilon)^{-1}W |u|^2 \right) dx \,.
\end{align*}
By the variational principle, this implies $N(-\Delta_\Omega^D+V)\leq N(-\Delta_\Omega^D - (1-\epsilon)^{-1} W)$, and by the first part of the proof this can be bounded by $C (1-\epsilon)^{-1} \|W\|_{\mathcal B,\Omega}\leq C(1-\epsilon)^{-1} \|V_-\|_{\mathcal B,\Omega}$, where the last inequality follows easily from the definition of $\|\cdot\|_{\mathcal B,\Omega}$ norm (see also the proof of Lemma \ref{orliczmono}). The proof of Theorem \ref{ch3:solomyak} is complete.
\end{proof}

\begin{proof}[Proof of Corollary \ref{weyl}]
We use an approximation argument similarly to that at the end of the proof of Theorem \ref{ch3:solomyak}. For fixed $\epsilon>0$ and continuous and compactly supported $W$ we write
$$
-\Delta_\Omega^D+\alpha V = (1-\epsilon) \left(-\Delta_\Omega^D + \alpha (1-\epsilon)^{-1} W \right) + \epsilon \left(-\Delta_\Omega^D+\alpha \epsilon^{-1} (V-W) \right)
$$
and bound, using the variational principle and Theorem \ref{ch3:solomyak},
\begin{align*}
N(-\Delta_\Omega^D+\alpha V) 
& \leq N(-\Delta_\Omega^D+ \alpha (1-\epsilon)^{-1} W) + N(-\Delta_\Omega^D + \alpha \epsilon^{-1} (V-W)) \\
& \leq N(-\Delta_\Omega^D+ \alpha (1-\epsilon)^{-1} W) + C \alpha \epsilon^{-1} \| V-W\|_{\mathcal B,\Omega} \,.
\end{align*}
Using the Weyl asymptotics for continuous and compactly supported potentials, we obtain
\begin{equation}
\label{eq:weylproof1}
\limsup_{\alpha\to\infty} \alpha^{-1} N(-\Delta_\Omega^D +\alpha V) \leq (1-\epsilon)^{-1} \frac{1}{4\pi} \int_\Omega W(x)_- \,dx + C \epsilon^{-1} \| V-W\|_{\mathcal B,\Omega} \,.
\end{equation}
Similarly, we write
$$
-\Delta_\Omega^D+\alpha (1-\epsilon) W = (1-\epsilon) \left(-\Delta_\Omega^D + \alpha V \right) + \epsilon \left(-\Delta_\Omega^D+\alpha \epsilon^{-1} (1-\epsilon) (W-V) \right)
$$
and bound
\begin{align*}
N(-\Delta_\Omega^D+\alpha (1-\epsilon) W) 
& \leq N(-\Delta_\Omega^D+ \alpha V) + N(-\Delta_\Omega^D + \alpha \epsilon^{-1} (1-\epsilon)(W-V)) \\
& \leq N(-\Delta_\Omega^D+ \alpha V) + C \alpha \epsilon^{-1}(1-\epsilon) \| W-V\|_{\mathcal B,\Omega} \,.
\end{align*}
We obtain
\begin{equation}
\label{eq:weylproof2}
\liminf_{\alpha\to\infty} \alpha^{-1} N(-\Delta_\Omega^D+\alpha V) \geq (1-\epsilon) \frac1{4\pi} \int_\Omega W(x)_- \,dx - C \epsilon^{-1} (1-\epsilon) \|W-V\|_{\mathcal B,\Omega} \,.
\end{equation}
By an argument as in Lemma \ref{orliczbounded}, there is a sequence of continuous $W_n$ with compact support such that $\|W_n-V\|_{\mathcal B,\Omega}\to 0$. We also note that with $C'$ such that $\mathcal A(1/C')=1$, we have for any $g\in L^\mathcal B(\Omega)$,
$$
\| g \|_{L^1(\Omega)} = C' \int_\Omega \frac{\overline{\sgn g}}{C'} g \,dx \leq C' \| g\|_{\mathcal B,\Omega} \,,
$$
and therefore $\|W_n - V\|_{L^1(\Omega)}\leq C' \|W_n-V\|_{\mathcal B,\Omega}\to 0$. Replacing $W$ by $W_n$ in \eqref{eq:weylproof1} and \eqref{eq:weylproof2} and letting $n\to\infty$ we obtain
\begin{align*}
(1-\epsilon) \frac{1}{4\pi} \int_\Omega V(x)_-\,dx & \leq \liminf_{\alpha\to\infty} \alpha^{-1} N(-\Delta_\Omega^D+\alpha V) \\
& \leq \limsup_{\alpha\to\infty} \alpha^{-1} N(-\Delta_\Omega^D+\alpha V) \leq (1-\epsilon)^{-1} \frac{1}{4\pi} \int_\Omega V(x)_-\,dx \,.
\end{align*}
Since $\epsilon>0$ is arbitrary, the corollary follows.
\end{proof}


\section{Trudinger's inequality}\label{sec:trudinger}

Our goal in this section is to prove Proposition \ref{trudinger}. The first part of this proposition is well-known and goes back to the works \cite{Yu,Po,Tr}. Since we will need some intermediate result from this proof for the proof of the second part, we recall it here, following \cite{Og}.

\begin{lemma}\label{trudingerlem}
There is a constant $\alpha_0>0$ and a continuous function $[0,\alpha_0)\ni\alpha\to C_\alpha$ with $C_0=0$ such that for any open set $\Omega\subset\R^2$ of finite measure, any $u\in H^1_0(\Omega)$ and any $0\leq\alpha<\alpha_0$.
$$
\int_\Omega \mathcal A\left( \frac{\alpha |u|^2}{\|\nabla u\|^2} \right) dx \leq C_\alpha |\Omega| \,.
$$
\end{lemma}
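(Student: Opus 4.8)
The plan is to prove Lemma~\ref{trudingerlem} by the classical power-series argument (the approach of \cite{Og}), tracking constants carefully so that they depend only on $\alpha$ and not on $\Omega$. By scaling we may assume $\|\nabla u\|=1$, and by density it suffices to treat $u\in C_c^\infty(\Omega)$, which we extend by zero to all of $\R^2$; then $u\in H^1(\R^2)$ with $\|\nabla u\|_{L^2(\R^2)}=1$. Expanding the exponential, we must control
$$
\int_\Omega \mathcal A\bigl(\alpha|u|^2\bigr)\,dx=\sum_{k\geq 2}\frac{\alpha^k}{k!}\int_\Omega |u|^{2k}\,dx\,.
$$
So the heart of the matter is a Gagliardo--Nirenberg-type inequality of the form $\|u\|_{L^{2k}(\R^2)}^{2k}\leq C^k\,k^k\,\|u\|_{L^2}^{2k-2}\|\nabla u\|_{L^2}^{2}$ for all integers $k\geq 2$, but this still carries a factor $\|u\|_{L^2}^{2k-2}$ that I need to absorb. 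Following Trudinger's original trick, I would instead first establish, for each $p\geq 1$,
$$
\|u\|_{L^{2p}(\R^2)}\leq C\sqrt{p}\,\|\nabla u\|_{L^2(\R^2)}^{1-1/p}\,\|u\|_{L^2(\R^2)}^{1/p}\,,
$$
obtained from the Sobolev embedding $W^{1,1}(\R^2)\hookrightarrow L^2(\R^2)$ applied to $|u|^{2p}$ together with H\"older's inequality; the constant here is the two-dimensional isoperimetric/Sobolev constant, which is dimensional and in particular independent of $\Omega$.

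Next I would run the standard bootstrap / interpolation: writing $\|u\|_{L^{2k}}^{2k}$ in terms of the above for a suitable sequence of exponents and iterating, one arrives at the cleaner bound
$$
\int_\Omega |u|^{2k}\,dx\leq (C_1 k)^{k-1}\int_\Omega |u|^2\,dx
$$
for all integers $k\geq 1$, with $C_1$ a universal constant. (Alternatively this can be read off directly from the sharp Moser--Trudinger inequality on $\R^2$, but the elementary iteration keeps everything self-contained and gives explicit control.) Summing over $k$ with the factor $\alpha^k/k!$ and using Stirling's formula $k!\geq (k/e)^k$, the series becomes
$$
\int_\Omega \mathcal A\bigl(\alpha|u|^2\bigr)\,dx\leq \frac{1}{C_1}\sum_{k\geq 2}\frac{(C_1 e\,\alpha)^k}{k}\int_\Omega |u|^2\,dx\,,
$$
which converges provided $C_1 e\,\alpha<1$, i.e. for $0\leq\alpha<\alpha_0:=(C_1 e)^{-1}$, and its sum is $\leq C_\alpha' \int_\Omega|u|^2\,dx$ with $C_\alpha'$ continuous in $\alpha$ and vanishing as $\alpha\to 0$.

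It remains to convert the bound $\int_\Omega \mathcal A(\alpha|u|^2)\,dx\leq C_\alpha'\int_\Omega |u|^2\,dx$ into the stated bound with $|\Omega|$ on the right. For this I would use convexity of $\mathcal A$ and the elementary fact that $\mathcal A(t)\geq \tfrac12 t^2$ for small $|t|$ to handle the region where $|u|$ is small, while on the set where $\alpha|u|^2$ is large one exploits that $\mathcal A$ grows faster than any power; more concretely, choosing $\alpha_0$ slightly smaller one writes $\alpha|u|^2=\tfrac12(2\alpha|u|^2)$ and, since $\mathcal A(t)\leq \tfrac12\mathcal A(2t)$ fails but $\mathcal A$ is convex with $\mathcal A(0)=0$ so that $\mathcal A(\theta t)\leq\theta\mathcal A(t)$ for $\theta\in[0,1]$, one gets $t^2\le 2\mathcal A(t)$ near $0$ and an estimate $\int_\Omega|u|^2\le c_\alpha(|\Omega|+\int_\Omega\mathcal A(\alpha|u|^2)dx)$ which, fed back in and absorbed, yields $\int_\Omega\mathcal A(\alpha|u|^2)\,dx\le C_\alpha|\Omega|$ with $C_\alpha$ still continuous and $C_0=0$. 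The main obstacle, and the only place where real care is needed, is keeping every constant in the iteration step genuinely universal (independent of $\Omega$ and of $k$ beyond the explicit $k^k$ growth), since a $k$-dependent slippage there would destroy the radius of convergence; the extension-by-zero reduction makes this manageable because all the underlying Sobolev inequalities are then just the standard ones on $\R^2$.
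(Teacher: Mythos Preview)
Your overall strategy---expand $\mathcal A$ as a power series and control $\int_\Omega|u|^{2k}\,dx$ termwise via a Gagliardo--Nirenberg inequality, then sum---is exactly the paper's. The genuine gap is in how you propose to obtain the inequality $\|u\|_{2p}\le C\sqrt{p}\,\|\nabla u\|_2^{1-1/p}\|u\|_2^{1/p}$. Applying the Sobolev embedding $W^{1,1}(\R^2)\hookrightarrow L^2(\R^2)$ to a power $|u|^m$ and using H\"older gives the recursion $\|u\|_{2m}^m\le C_0\,m\,\|u\|_{2(m-1)}^{m-1}\|\nabla u\|_2$; iterating with $\|\nabla u\|_2=1$ yields $\|u\|_{2k}^k\le C_0^{k-1}k!\,\|u\|_2$, that is, a constant growing like $k$, not $\sqrt{k}$. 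Plugging this into the exponential series, the general term behaves (after Stirling) like $(C_0^2\alpha)^k k!$, which diverges for every $\alpha>0$. You correctly flag the constant's growth as ``the main obstacle,'' but the route you sketch does not clear it; invoking the sharp Moser--Trudinger inequality as an alternative is of course circular here. The paper gets the needed $\sqrt{q}$ growth by a Fourier-side argument: Hausdorff--Young followed by H\"older against $(|\xi|^2+\kappa^2)^{-1/2}$ and optimization in $\kappa$ produce the explicit Gagliardo--Nirenberg constant $(q/2)^{1/2}(4\pi)^{-(q-2)/(2q)}$, and it is precisely this $\sqrt{q}$ that makes the series converge.

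A secondary point: your plan to pass from a bound $\le C_\alpha'\|u\|_2^2$ to one $\le C_\alpha|\Omega|$ by an absorption argument can be made to work, but the sketch you give does not, since $C_\alpha'/\alpha^2$ does not tend to zero as $\alpha\to 0$ (the $k=2$ term already contributes at order $\alpha^2$), so the naive absorption need not close. The paper sidesteps this entirely by inserting H\"older's inequality $\|u\|_2\le|\Omega|^{1/2-1/q}\|u\|_q$ into Gagliardo--Nirenberg \emph{before} summing, which eliminates $\|u\|_2$ term by term and yields $\int_\Omega|u|^{2n}\,dx\le c_n\,\|\nabla u\|_2^{2n}\,|\Omega|$ directly; this also makes the continuity of $\alpha\mapsto C_\alpha$ and $C_0=0$ immediate.
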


\begin{proof}
We begin by showing that for $q>2$ and $u\in H^1(\R^2)$,
\begin{equation}
\label{eq:sobint}
\left( \int_{\R^2} |\nabla u|^2 \,dx \right)^{1-2/q} \left( \int_{\R^2} |u|^2\,dx \right)^{2/q} \geq \frac{2}{q} (4\pi)^{(q-2)/q} \left(\int_{\R^2} |u|^q \,dx \right)^{2/q} \,.
\end{equation}
The point here is the explicit expression of the constant on the right side and, in particular, its behavior as $q\to\infty$.

In order to prove this inequality we apply, with a parameter $\kappa>0$ to be determined, the Hausdorff--Young and the H\"older inequality to get
$$
\|u\|_q \leq (2\pi)^{-(q-2)/q} \|\hat u\|_{q'} \leq (2\pi)^{-(q-2)/q} \|(|\xi|^2+\kappa^2)^{1/2}\hat u\| \|(|\xi|^2 +\kappa^2)^{-1/2}\|_{2q/(q-2)} \,.
$$
Since
$$
\left\|(|\xi|^2 +\kappa^2)^{-1/2}\right\|_{2q/(q-2)}^{2q/(q-2)} = 2\pi \int_0^\infty \frac{k\,dk}{(k^2+\kappa^2)^{q/(q-2)}} = \frac{q-2}{2} \, \pi\, \kappa^{-4/(q-2)} \,,
$$
we obtain
$$
\|u\|_q^2 \leq (8\pi)^{-(q-2)/q} \left( q-2\right)^{(q-2)/q} \kappa^{-4/q} \left( \|\nabla u\|^2 + \kappa^2 \|u\|^2 \right).
$$
We optimize the right side by choosing $\kappa^2 = (2/(q-2)) \|\nabla u\|^2 /\|u\|^2$ and obtain \eqref{eq:sobint}.

If $u\in H^1_0(\Omega)$, we can use
$$
\int_\Omega |u|^2\,dx \leq |\Omega|^{1-2/q} \left( \int_\Omega |u|^q\,dx \right)^{2/q}
$$
and obtain from \eqref{eq:sobint}
$$
\left( \int_\Omega |\nabla u|^2\,dx \right)^{q/2} |\Omega| \geq \left( \frac{2}{q} \right)^{q^2/(2(q-2))} (4\pi)^{q/2} \int_{\Omega} |u|^q \,dx \,.
$$
Thus,
$$
\int_\Omega \mathcal A\left( \frac{\alpha |u|^2}{\|\nabla u\|^2} \right) dx = \sum_{n=2}^\infty \frac{1}{n!} \int_\Omega \left( \frac{\alpha |u|^2}{\|\nabla u\|^2} \right)^n dx 
\leq C_\alpha \, |\Omega| 
$$
with
$$
C_\alpha := \sum_{n=2}^\infty \frac{n^{n^2/(n-1)}}{n!} \left( \frac{\alpha}{4\pi} \right)^n \,.
$$
Using Stirling's asymptotics and the root test we see that $C_\alpha$ converges if $0\leq\alpha<4\pi/e$ and defines a continuous function with $C_0=0$.
\end{proof}

\begin{proof}[Proof of Proposition \ref{trudinger}]
Let $C_\alpha$ be the constant from Lemma \ref{trudingerlem}. For the proof of the first part, we simply choose $S_2>0$ such that $C_{S_2}\leq 1$.

For the proof of the second part, let $\tilde Q$ be the cube with the same center as $Q$ but with three times its side length and let $E: H^1(Q)\to H^1(\tilde Q)$ be the extension operator by repeated reflection. Thus, for all $u\in H^1(Q)$,
$$
\int_{\tilde Q} |Eu|^2 \,dx = 9 \int_Q |u|^2\,dx
\qquad\text{and}\qquad
\int_{\tilde Q} |\nabla Eu|^2\,dx = 9 \int_Q |\nabla u|^2\,dx \,.
$$
Let $\eta\in C_0^\infty(\tilde Q)$ be a real function with $\eta\equiv 1$ on $Q$. We choose this function to be of the form $\eta(x) = \eta_0((x-a)/L)$, where $a$ and $L$ are the center and the side length of $Q$, respectively, and where $\eta_0$ is a universal function. Then the operator $\tilde E$ defined by $\tilde E u = \eta E u$ maps $H^1(Q)$ into $H^1_0(\tilde Q)$ and satisfies
\begin{align*}
\int_{\tilde Q} |\nabla \tilde Eu|^2\,dx &
= \int_{\tilde Q} \left( \eta^2 |\nabla E u|^2 - \eta\Delta\eta |u|^2 \right) dx \notag \\
& \leq 9 \|\eta_0\|_\infty^2 \int_Q |\nabla u|^2 \,dx + 9 \|\eta_0\Delta\eta_0\|_\infty |Q|^{-1} \int_Q |u|^2 \,dx \,.
\end{align*}
This inequality is, in particular, valid for functions $u\in H^1(\Omega)$ which vanish near $Q\cap\partial\Omega$, because such functions can be extended by zero to functions in $H^1(Q)$. If, in addition, $\int_\Omega u\,dx =\int_Q u\,dx=0$, then we can bound the last term on the right side by the Poincar\'e inequality on $Q$ and we finally obtain
$$
\int_{\tilde Q} |\nabla \tilde Eu|^2\,dx \leq c \int_Q |\nabla u|^2\,dx
$$
with $c:=9 \left( \|\eta_0\|_\infty^2 + \pi^{-2} \|\eta_0\Delta\eta_0\|_\infty \right)$.

Moreover, let $\tilde\Omega\subset\tilde Q$ be the set obtained from $\Omega$ by repeated reflection on the boundaries of the cubes. Then $|\tilde\Omega|= 9 |\Omega|$. If $u\in H^1(\Omega)$ vanishes near $Q\cap\partial\Omega$, then $\tilde E u \in H^1_0(\tilde\Omega)$ and therefore by the inequality from Lemma \ref{trudingerlem},
$$
\int_{\Omega} \mathcal A\left( \frac{\alpha |u|^2}{\|\nabla u\|_{L^2(\Omega)}^2 }\right)dx
\leq \int_{\tilde\Omega} \mathcal A\left( \frac{c \alpha |\tilde E u|^2}{\|\nabla \tilde E u\|_{L^2(\Omega)} }\right)dx \leq C_{c\alpha} |\tilde \Omega| = 9C_{c\alpha} |\Omega| \,.
$$
Choosing $S_2'>0$ such that $9C_{cS_2'}\leq 1$, we obtain the claimed inequality.
\end{proof}


\section{The covering lemma}\label{sec:covering}

Our goal in the section is to prove Proposition \ref{coveringorlicz}. In a first step we will see that around each point we can center a cube for which the norm of a given function has a prescribed Orlicz norm. This requires some basic facts about Orlicz spaces, which we recall in Appendix \ref{sec:orlicz}. In a second step we apply the Besicovich theorem to obtain a suitable finite collection of cubes. The relevant version of the Besicovich theorem will be recalled in Appendix \ref{sec:besicovich}.

\begin{lemma}\label{orlicznormcont}
Let $\Omega\subset\R^2$ be an open set of finite measure, let $0\leq W\in L^{\mathcal B}(\Omega)$ and let $0< A<\| W \|_{\mathcal B,\Omega}$. Then for any $x\in\overline\Omega$ there is an open cube $Q_x$ centered at $x$ with
$$
\| W \|_{\mathcal B,Q_x\cap\Omega}=A \,.
$$
If $Q_x$ is chosen maximal with this property, then $\overline\Omega\ni x\mapsto |Q_x|$ is upper semi-continuous.
\end{lemma}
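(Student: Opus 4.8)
The plan is to establish the two claims in Lemma \ref{orlicznormcont} separately, relying on monotonicity and continuity properties of the map $Q \mapsto \|W\|_{\mathcal{B}, Q \cap \Omega}$ that I will extract from the basic theory of Orlicz spaces (recalled in Appendix \ref{sec:orlicz}). For the existence of a cube $Q_x$ with $\|W\|_{\mathcal{B}, Q_x \cap \Omega} = A$, I would fix $x \in \overline{\Omega}$ and consider, for $\ell > 0$, the open cube $Q_x^\ell$ centered at $x$ with side length $\ell$, and study $\phi(\ell) := \|W\|_{\mathcal{B}, Q_x^\ell \cap \Omega}$. First I would observe that $\phi$ is nondecreasing in $\ell$, which follows from the monotonicity of the Orlicz norm under enlargement of the domain (this is the kind of statement proved in what the excerpt calls Lemma \ref{orliczmono}). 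Next, $\phi(\ell) \to 0$ as $\ell \to 0^+$: this uses that $W \in L^{\mathcal{B}}(\Omega)$ together with absolute continuity of the Orlicz norm with respect to the measure of the domain (the Orlicz-space analogue of the fact that $\int_S |W| \to 0$ as $|S| \to 0$), applied to $S = Q_x^\ell \cap \Omega$ whose measure tends to $0$. And $\phi(\ell) \to \|W\|_{\mathcal{B}, \Omega}$ as $\ell \to \infty$, since $Q_x^\ell \cap \Omega$ exhausts $\Omega$ (again by a monotone-convergence property of the Orlicz norm under an increasing exhaustion of the domain). Since $A < \|W\|_{\mathcal{B}, \Omega}$ and $A > 0$, once I show $\phi$ is continuous the intermediate value theorem produces an $\ell$ with $\phi(\ell) = A$, and $Q_x = Q_x^\ell$ works.

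The continuity of $\phi$ is the step I expect to require the most care, and it is really the crux of the first claim. Left-continuity is the easier half: as $\ell' \uparrow \ell$, the sets $Q_x^{\ell'} \cap \Omega$ increase to $Q_x^\ell \cap \Omega$ (the open cube), so $\phi(\ell'^{}) \to \phi(\ell)$ by the same monotone-exhaustion property used above. For right-continuity one compares $Q_x^{\ell'} \cap \Omega$ with $\ell' \downarrow \ell$ against $Q_x^\ell \cap \Omega$; the difference in domains is $(Q_x^{\ell'} \setminus \overline{Q_x^\ell}) \cap \Omega$, a set of measure tending to $0$, and I would control the corresponding increment of the norm by splitting $W = W\1_{Q_x^\ell} + W\1_{Q_x^{\ell'} \setminus Q_x^\ell}$, using subadditivity of $\|\cdot\|_{\mathcal{B}}$ together with the absolute-continuity statement applied to the small piece. (There is a minor subtlety that closed cube faces have measure zero, so it makes no difference to the norm whether one includes them; I would note this once.) All of these monotonicity, subadditivity, and absolute-continuity statements are standard facts about Orlicz norms and I would simply cite them from Appendix \ref{sec:orlicz}.

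For the upper semicontinuity of $x \mapsto |Q_x|$, where $Q_x$ is the \emph{maximal} cube centered at $x$ with $\|W\|_{\mathcal{B}, Q_x \cap \Omega} = A$, I would argue as follows. First I need that the maximal such cube is well-defined: the set of admissible side lengths $\{\ell : \phi_x(\ell) = A\}$ (now writing $\phi_x$ to emphasize the base point) is a closed interval by continuity of $\phi_x$, possibly a single point, and in any case bounded above since $\phi_x(\ell) \to \|W\|_{\mathcal{B},\Omega} > A$ forces $\phi_x$ eventually to exceed $A$ (here I may need $W$ compactly supported, or just that $\phi_x$ is strictly beyond $A$ past some scale — in the application $W$ has compact support, so $\phi_x$ is eventually constant equal to $\|W\|_{\mathcal B,\Omega}$); the maximal length $\ell_x$ is the right endpoint. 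To get upper semicontinuity at $x_0$, suppose $x_n \to x_0$ in $\overline{\Omega}$ with $\ell_{x_n} \to L \geq \ell_{x_0}$; I want $L \leq \ell_{x_0}$, i.e.\ $L = \ell_{x_0}$. The cubes $Q_{x_n}^{\ell_{x_n}}$ converge (in the Hausdorff sense, and their indicator functions converge a.e.\ up to the boundary) to the cube $Q_{x_0}^L$, so the domains $Q_{x_n}^{\ell_{x_n}} \cap \Omega$ converge to $Q_{x_0}^L \cap \Omega$ up to a null set; passing to the limit in $\|W\|_{\mathcal{B}, Q_{x_n}^{\ell_{x_n}} \cap \Omega} = A$ — using a continuity-of-the-norm-in-the-domain estimate of the type established above, now with both the center and the side length varying — gives $\|W\|_{\mathcal{B}, Q_{x_0}^L \cap \Omega} = A$, so $L$ is an admissible side length at $x_0$ and hence $L \leq \ell_{x_0}$ by maximality. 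Thus $\limsup_n |Q_{x_n}| = \limsup_n \ell_{x_n}^2 \leq \ell_{x_0}^2 = |Q_{x_0}|$, which is exactly upper semicontinuity. The main obstacle throughout is the joint continuity of the Orlicz norm with respect to simultaneous perturbation of the center and the side length of the cube; I would handle it uniformly by the triangle inequality and the absolute-continuity-of-the-norm lemma, reducing everything to the elementary fact that the symmetric difference of two nearby cubes has small Lebesgue measure.
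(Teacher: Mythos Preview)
Your proposal is correct and follows essentially the same route as the paper: define $\phi_x(\ell)=\|W\|_{\mathcal B,\,(x+\ell Q)\cap\Omega}$, show it is monotone and continuous with the right limits, apply the intermediate value theorem, and for upper semi-continuity pass to the limit in $\|W\|_{\mathcal B,\,(x_n+\ell_{x_n}Q)\cap\Omega}=A$ using that the symmetric difference of the domains has vanishing measure. The paper streamlines the continuity steps by invoking a single lemma (Lemma~\ref{orliczcont}: $|E_n\Delta E|\to 0\Rightarrow\|g\|_{\mathcal B,E_n}\to\|g\|_{\mathcal B,E}$), which handles at once the continuity of $\phi_x$ in $\ell$ and the joint continuity in $(x,\ell)$; your parenthetical worry about compact support is unnecessary, since $\phi_x(\ell)\to\|W\|_{\mathcal B,\Omega}>A$ already forces $\{\ell:\phi_x(\ell)=A\}$ to be bounded.
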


\begin{proof}
First, we fix $x\in\overline\Omega$ and consider the function $j(l) := \| V \|_{\mathcal B,(x+l Q)\cap\Omega}$, where $Q:=(-1/2,1/2)^d$, so $x+lQ$ is the open cube centered at $x$ with side length $l$. By a simple property of the norm (see \eqref{eq:orliczsubaddcor}), $j$ is a non-decreasing function of $l$.

We claim that $j$ is continuous on $[0,\infty)$ with $j(0)=0$. To prove this, we let $(l_n)\subset(0,\infty)$ with $l_n\to l\in [0,\infty)$. Setting $E_n:=(x+l_n Q)\cap\Omega$ and $E:=(x+lQ)\cap\Omega$, we clearly have $|E_n\Delta E|\to 0$ as $n\to\infty$ and therefore, according to Lemma \ref{orliczcont},
$$
j(l_n) = \|V\|_{\mathcal B,E_n} \to \|V\|_{\mathcal B,E}=j(l) \,,
$$
proving the claimed continuity and the fact that $j(0)=0$.

Also, it is easy to see that $\lim_{l\to\infty} j(l) = \|V\|_{\mathcal B,\Omega}$. Thus, for any $0<A<\|V\|_{\mathcal B,\Omega}$ there is an $l$ such that $j(l) = A$. We denote $l_x:=\max\{ l:\ j(l)=A\}$, making the dependence on $x$ explicit.

We now prove the upper semi-continuity statement. This will follow if we can show that for $(x_n)\subset\overline\Omega$ and $(l_n)\subset (0,\infty)$ with $x_n\to x\in\overline\Omega$ and $l_n\to l\in (0,\infty)$ one has
$$
\| V \|_{\mathcal B,(x_n+l_n Q)\cap\Omega} \to \| V \|_{\mathcal B,(x+l Q)\cap\Omega} \,.
$$
Indeed, if we apply this statement to $l_n=l_{x_n}$, then we obtain that $\| V \|_{\mathcal B,(x+lQ)\cap\Omega} = A$, and by maximality of $l_x$ we conclude that $l_x \geq l$, which proves upper semi-continuity.

To prove the statement above, we apply again Lemma \ref{orliczcont}, this time with $E_n := (x_n+l_n Q)\cap\Omega$ and $E:=(x+lQ)\cap\Omega$. Again one easily checks that $|E_n\Delta E|\to 0$ as $n\to\infty$, so the assumption of this lemma is satisfied. This finishes the proof of the lemma.
\end{proof}

We are now in position to give the

\begin{proof}[Proof of Proposition \ref{coveringorlicz}]
We may assume that $0<A<\|W\|_{\mathcal B,\Omega}$. Then Lemma \ref{orlicznormcont} yields for any point $x\in\overline\Omega$ an open cube $Q_x$ centered at $x$ with $\| W \|_{\mathcal B,Q_x\cap\Omega}=A$. Moreover, the side length $|Q_x|^{1/2}$ depends in an upper semi-continuous way on $x$. Thus, the Besicovitch lemma (Proposition \ref{ch3:besicovitch}) yields a countable covering of multiplicity $4$ and with the property that the cubes can be divided into families $\Xi^k$, $k=1,\ldots,17$, each of which consists of disjoint cubes.

It remains to show that the covering is finite and with the claimed upper bound on the number $M$ of cubes. For any $k=1,\ldots,17$, by the superadditivity property of the Orlicz norm (Lemma \ref{orliczsubadd}),
$$
(\# \Xi^k) A = \sum_{Q\in\Xi^k} \|W\|_{\mathcal B,Q\cap\Omega} \leq \|W\|_{\mathcal B,\Omega} \,.
$$
Summing over $k$ we obtain
$$
M A \leq 17\, \|W\|_{\mathcal B,\Omega} \,,
$$
which is the claimed bound.
\end{proof}



\appendix

\section{Orlicz spaces}\label{sec:orlicz}

In order to make this paper self-contained in this appendix we provide proofs of the results from Orlicz space theory which we need. For a deeper treatment we refer, for instance, to \cite{KrRu}. 

Throughout this section we consider a convex function $\mathcal A$ on $[0,\infty)$ satisfying $\mathcal A(t)=0$ if and only if $t=0$, as well as
$$
\lim_{t\to\infty} \frac{\mathcal A(t)}{t}=\infty
\qquad\text{and}\qquad
\lim_{t\to 0} \frac{\mathcal A(t)}{t} = 0 \,.
$$
(Such functions are called \emph{Young functions}.) The example relevant for the rest of this paper is the function
$$
\mathcal A(t) = e^{|t|} -1-|t| \,,
$$
but our arguments are valid for general $\mathcal A$.

Let $\mathcal B$ be the Legendre transform of $\mathcal A$, that is,
$$
\mathcal B(s) = \sup_{t\geq 0} \left(st - \mathcal A(t)\right)
\qquad\text{for}\ s\geq 0 \,.
$$
It can be shown that $\mathcal B$ is again a Young function. In the example above, we have
$$
\mathcal B(s) = (1+|s|)\ln(1+|s|) - |s| \,. 
$$
For a finite measure space $(X,dx)$ we denote by $L^\mathcal B(X)$ the set of measurable functions $g:X\to \C$ for which
$$
\| g\|_{\mathcal B,X} = \sup\left\{ \left| \int_X fg\,dx \right| :\ f:X\to\C \ \text{measurable such that}\ \int_X \mathcal A(|f|) \,dx \leq |X| \right\}
$$
is finite (identifying almost everywhere equal functions). Clearly, $\|\cdot\|_{\mathcal B,X}$ defines a norm. We first show that this norm is superadditive.

\begin{lemma}\label{orliczsubadd}
Let $g\in L^\mathcal B(X)$ and let $E_1,E_2,\ldots$ be pairwise disjoint measurable subsets of $X$. Then
$$
\sum_j \|g\|_{\mathcal B,E_j} \leq \|g\|_{\mathcal B,X} \,.
$$
\end{lemma}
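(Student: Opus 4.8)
The natural approach is to reduce the claim to finitely many sets at a time, and for those to split a near-optimal test function $f$ for $\|g\|_{\mathcal B,X}$ across the $E_j$'s. Fix $\epsilon>0$ and, for each $j$, pick a measurable $f_j:E_j\to\C$ with $\int_{E_j}\mathcal A(|f_j|)\,dx\le|E_j|$ and $\left|\int_{E_j}f_j g\,dx\right|\ge\|g\|_{\mathcal B,E_j}-\epsilon 2^{-j}$. After multiplying $f_j$ by a suitable phase we may assume $\int_{E_j}f_j g\,dx\ge0$, so it equals $\left|\int_{E_j}f_j g\,dx\right|$. Now set $f:=\sum_j f_j\1_{E_j}$ on $\bigcup_j E_j$ and $f:=0$ on $X\setminus\bigcup_j E_j$; since the $E_j$ are pairwise disjoint, $\int_X\mathcal A(|f|)\,dx=\sum_j\int_{E_j}\mathcal A(|f_j|)\,dx\le\sum_j|E_j|\le|X|$, so $f$ is admissible in the definition of $\|g\|_{\mathcal B,X}$.

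The point of the phase normalization is that the cross terms vanish and the contributions add coherently:
$$
\|g\|_{\mathcal B,X}\ \ge\ \left|\int_X fg\,dx\right| = \sum_j \int_{E_j} f_j g\,dx = \sum_j \left|\int_{E_j} f_j g\,dx\right| \ \ge\ \sum_j \|g\|_{\mathcal B,E_j} - \epsilon \,.
$$
Here the only subtlety is interchanging the sum and the integral, which is justified because $fg\in L^1(X)$: indeed $|fg|\le\sum_j|f_j g|\1_{E_j}$, and each $f_j g\in L^1(E_j)$ (being $fg$ restricted, with $f_j$ admissible), while $\sum_j\int_{E_j}|f_j g|\,dx\le\sum_j\|g\|_{\mathcal B,E_j}+\epsilon\le\|g\|_{\mathcal B,X}+\epsilon<\infty$ by applying the single-set bound $\int_{E_j}|f_j g|\,dx\le\|g\|_{\mathcal B,E_j}$ (valid since $|f_j|$ is itself admissible on $E_j$). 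Letting $\epsilon\to0$ gives $\sum_j\|g\|_{\mathcal B,E_j}\le\|g\|_{\mathcal B,X}$.

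If one prefers to avoid any convergence bookkeeping, one can first prove the inequality for two sets $E_1,E_2$ exactly as above (no series, just two terms), then by induction for any finite family $E_1,\dots,E_N$, obtaining $\sum_{j=1}^N\|g\|_{\mathcal B,E_j}\le\|g\|_{\mathcal B,X}$ for every $N$, and finally let $N\to\infty$. This sidesteps the dominated-convergence step entirely and is the cleanest route. The main (very mild) obstacle is simply making sure the admissibility constraint $\int_X\mathcal A(|f|)\,dx\le|X|$ survives the gluing — which works precisely because the $E_j$ are disjoint and we are allowed the full budget $|X|\ge\sum_j|E_j|$ — and handling the phases so that the integrals over the pieces do not partially cancel. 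Neither step is genuinely hard; the lemma is essentially a packaging of the additivity of the constraint functional $f\mapsto\int\mathcal A(|f|)\,dx$ over disjoint sets.
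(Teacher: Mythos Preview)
Your proof is correct and follows essentially the same approach as the paper: choose admissible test functions on each $E_j$, adjust phases so the pieces add coherently, glue them into a single admissible test function on $X$ using disjointness and $\sum_j|E_j|\le|X|$, and compare to $\|g\|_{\mathcal B,X}$. The only cosmetic differences are that the paper works with arbitrary admissible $f_j$ and passes to the supremum at the end (rather than $\epsilon$-near-optimal $f_j$), and it uses a \emph{pointwise} phase $s_j$ making $s_jf_jg=|f_jg|\ge 0$, so the glued integrand is nonnegative and the sum--integral interchange for infinitely many pieces is immediate; your finite-$N$-then-limit alternative achieves the same thing and avoids the slight circularity in your first integrability justification (where you invoked $\sum_j\|g\|_{\mathcal B,E_j}\le\|g\|_{\mathcal B,X}$ before it was proved).
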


Note that this implies, in particular, that
\begin{equation}
\label{eq:orliczsubaddcor}
\|g\|_{\mathcal B,E} \leq \|g\|_{\mathcal B,X}
\qquad\text{if}\ E\subset X \,.
\end{equation}

\begin{proof}
Consider any sequence of measurable functions $f_1,f_2,\ldots$ on $E_1,E_2,\ldots$ with
$$
\int_{E_j} \mathcal A(|f_j|) \,dx \leq |E_j|
\qquad\text{for all}\ j \,.
$$
We define functions $s_j$ on $E_j$ with $|s_j|=1$ and $s_j f_j g =|f_j g|$ pointwise on $E_j$. We define a function $\tilde f$ on $X$ by $\tilde f|_{E_j}:=s_j f_j$ for each $j$ and $\tilde f:=0$ on $X\setminus\bigcup_j E_j$. Then
$$
\int_X \mathcal A(|\tilde f|) \,dx = \sum_j \int_{E_j} \mathcal A(|s_j f_j|) \,dx = \sum_j \int_{E_j} \mathcal A(|f_j|) \,dx \leq \sum_j |E_j| \leq |E| \,.
$$
Thus,
$$
\sum_j \left| \int_{E_j} f_j g \,dx \right| = \sum_j \int_{E_j} \tilde f_j g \,dx = \int_X \tilde f g \,dx \leq \|g\|_{\mathcal B,X} \,.
$$
Taking the supremum over all functions $f_j$ with the specified properties we arrive at the inequality in the lemma.
\end{proof}

\begin{lemma}\label{orliczmono}
Let $g\in L^\mathcal B(X)$, let $E,F\subset X$ be measurable with $|E|\leq |F|$ and assume that $g$ vanishes on $X\setminus(E\cap F)$. Then
$$
\|g\|_{\mathcal B,E} \leq \|g\|_{\mathcal B,F} \leq \frac{|F|}{|E|} \|g\|_{\mathcal B,E} \,.
$$
\end{lemma}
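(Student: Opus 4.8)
The plan is to argue directly from the definition of $\|\cdot\|_{\mathcal B,\cdot}$ as a supremum over test functions $f$ normalized by $\int \mathcal A(|f|)\,dx \le |\cdot|$, using crucially that $g$ vanishes outside $E\cap F$, together with the elementary scaling inequality $\mathcal A(\lambda t)\le\lambda\,\mathcal A(t)$ valid for $\lambda\in[0,1]$ (immediate from $\mathcal A(0)=0$ and convexity of $\mathcal A$, applied to $\lambda t = \lambda t + (1-\lambda)\cdot 0$).

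For the left inequality $\|g\|_{\mathcal B,E}\le\|g\|_{\mathcal B,F}$, I would take any measurable $f$ on $E$ with $\int_E\mathcal A(|f|)\,dx\le|E|$ and extend it to $F$ by setting $\tilde f=f$ on $E\cap F$ and $\tilde f=0$ on $F\setminus E$. Because $g\equiv 0$ off $E\cap F$, one has $\int_E fg\,dx=\int_{E\cap F}fg\,dx=\int_F\tilde f g\,dx$, while $\int_F\mathcal A(|\tilde f|)\,dx=\int_{E\cap F}\mathcal A(|f|)\,dx\le\int_E\mathcal A(|f|)\,dx\le|E|\le|F|$, so $\tilde f$ is admissible for $F$; taking the supremum over $f$ gives the bound. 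Note this is the only point where the hypothesis $|E|\le|F|$ enters this first estimate.

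For the right inequality, set $\lambda:=|E|/|F|\in(0,1]$. Given an admissible $f$ on $F$ (i.e.\ $\int_F\mathcal A(|f|)\,dx\le|F|$), consider $h:=\lambda f$ restricted to $E$, extended by $0$ on $E\setminus F$. Using $\mathcal A(\lambda t)\le\lambda\mathcal A(t)$ we get $\int_E\mathcal A(|h|)\,dx\le\lambda\int_{E\cap F}\mathcal A(|f|)\,dx\le\lambda\int_F\mathcal A(|f|)\,dx\le\lambda|F|=|E|$, so $h$ is admissible for $E$. Since $g$ vanishes outside $E\cap F$, $\int_F fg\,dx=\int_{E\cap F}fg\,dx=\lambda^{-1}\int_E hg\,dx$, hence $\bigl|\int_F fg\,dx\bigr|\le\lambda^{-1}\|g\|_{\mathcal B,E}=(|F|/|E|)\,\|g\|_{\mathcal B,E}$; taking the supremum over $f$ completes the proof.

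The only real (and mild) obstacle is the second inequality: simply restricting an admissible test function for $F$ to $E$ need not keep it admissible, so one must rescale by $\lambda=|E|/|F|$ and then control $\int\mathcal A(\lambda|f|)$ by $\lambda\int\mathcal A(|f|)$ via convexity. Everything else is routine bookkeeping with the support condition on $g$, so I do not anticipate further difficulties.
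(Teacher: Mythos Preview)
Your argument is correct and is essentially the same as the paper's: both hinge on the support condition to reduce to integrals over $E\cap F$ and on the convexity inequality $\mathcal A(\lambda t)\le\lambda\,\mathcal A(t)$ to rescale an $F$-admissible test function into an $E$-admissible one. The only cosmetic difference is that the paper first rewrites each norm as a supremum over test functions supported on $E\cap F$, whereas you carry out the extension/restriction explicitly; the content is identical.
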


\begin{proof}
We first observe that, since $f$ vanishes off $E\cap F$ and since $\mathcal A$ is monotone, we have
$$
\|g\|_{\mathcal B,E} = \sup\left\{ \left| \int_{E\cap F} fg\,dx \right| : f:E\cap F\to\C \ \text{measurable with} \int_{E\cap F} \mathcal A(|f|) \,dx \leq |E| \right\}
$$
and
$$
\|g\|_{\mathcal B,F} = \sup\left\{ \left| \int_{E\cap F} fg\,dx \right| : f:E\cap F\to\C \ \text{measurable with} \int_{E\cap F} \mathcal A(|f|) \,dx \leq |F| \right\}.
$$
Since $|E|\leq |F|$, this immediately implies $\|g\|_{\mathcal B,E}\leq \|g\|_{\mathcal B,F}$.

To prove the converse inequality, let $f$ be a measurable function on $E\cap F$ with $\int_{E\cap F} \mathcal A(|f|) \,dx \leq |F|$. Since $\mathcal A$ is convex with $\mathcal A(0)=0$ we have $\mathcal A(\theta t)\leq \theta\mathcal A(t)$ for any $0\leq\theta\leq 1$ and any $t\geq0$. Thus, $\tilde f:=(|E|/|F|) f$ satisfies $\int_{E\cap F}\mathcal A(|\tilde f|)\,dx \leq |E|$, and therefore
$$
\|g\|_{\mathcal B,E} \geq \left| \int_{E\cap F} \tilde f g\,dx \right| = \frac{|E|}{|F|} \left| \int_{E\cap F} f g\,dx \right| \,.
$$
Taking the supremum over all $f$ as before, we deduce $\|g\|_{\mathcal B,E}\geq (|E|/|F|) \|g\|_{\mathcal B,F}$, as claimed.
\end{proof}

\begin{lemma}\label{luxemburg}
For any $g\in L^\mathcal B(X)$,
$$
\int_X \mathcal B\left( \frac{|X|\, |g|}{\|g\|_{\mathcal B,X}} \right)dx \leq |X| \,.
$$
\end{lemma}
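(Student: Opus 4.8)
The plan is to test the duality definition of $\|g\|_{\mathcal B,X}$ against the function that realizes equality in Young's inequality $st\le\mathcal A(t)+\mathcal B(s)$. Since $\mathcal B$ is the Legendre transform of the Young function $\mathcal A$ and $\mathcal A(t)/t\to\infty$ as $t\to\infty$, the supremum defining $\mathcal B(s)$ is attained at a finite point, and one can select a non-decreasing (hence measurable) map $\tau\colon[0,\infty)\to[0,\infty)$ with $s\,\tau(s)-\mathcal A(\tau(s))=\mathcal B(s)$ for all $s\ge0$. Assume $\lambda:=\|g\|_{\mathcal B,X}>0$ (if $\lambda=0$ then $g=0$ a.e.\ and there is nothing to prove), and set $h:=|X|\,|g|/\lambda$. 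The candidate test function is $f:=\tau(h)\,\overline{\sgn g}$, for which $fg=|g|\,\tau(h)\ge0$ and $|f|=\tau(h)$ pointwise. Integrating the pointwise identity $\mathcal B(h)=h\,\tau(h)-\mathcal A(\tau(h))$ and using $h=|X|\,|g|/\lambda$ gives, at least formally,
$$
\int_X\mathcal B(h)\,dx=\frac{|X|}{\lambda}\int_X fg\,dx-\int_X\mathcal A(\tau(h))\,dx ,
$$
so that, if $f$ were admissible in the definition of the norm (i.e.\ $\int_X\mathcal A(|f|)\,dx\le|X|$), the right-hand side would be $\le|X|-\int_X\mathcal A(\tau(h))\,dx\le|X|$, which is the assertion.

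The obstacle --- and the only point requiring care --- is that $f$ need not be admissible, and $\int_X h\,\tau(h)\,dx$ need not be finite (both integrals can equal $+\infty$ when $g$ is unbounded), so the display above is not yet justified. I would repair this by truncating and then rescaling. First, $g$ is finite a.e.\ (test the norm against $\epsilon\1_S$ for a set $S$ of positive finite measure and small $\epsilon$). Fix $N>0$, set $E_N:=\{x\in X:\ |g(x)|\le N\}$, so that $h$ is bounded on $E_N$ and $\alpha_N:=\int_{E_N}\mathcal A(\tau(h))\,dx<\infty$. Put $\theta_N:=\min\{1,|X|/\alpha_N\}\in(0,1]$ and $f_N:=\theta_N\,\tau(h)\,\1_{E_N}\,\overline{\sgn g}$. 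Convexity of $\mathcal A$ with $\mathcal A(0)=0$ gives $\mathcal A(\theta_N t)\le\theta_N\mathcal A(t)$ for $0\le\theta_N\le1$, hence $\int_X\mathcal A(|f_N|)\,dx\le\theta_N\alpha_N=\min\{\alpha_N,|X|\}\le|X|$; so $f_N$ is admissible and $\theta_N\int_{E_N}|g|\,\tau(h)\,dx=\int_X f_N g\,dx\le\lambda$.

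Because all the integrals over $E_N$ are now finite, we have $\int_{E_N}\mathcal B(h)\,dx=(|X|/\lambda)\int_{E_N}|g|\,\tau(h)\,dx-\alpha_N$, and combining this with $\int_{E_N}|g|\,\tau(h)\,dx\le\lambda/\theta_N$ gives
$$
\int_{E_N}\mathcal B(h)\,dx\ \le\ \frac{|X|}{\theta_N}-\alpha_N\ =\ \max\{|X|,\alpha_N\}-\alpha_N\ \le\ |X| .
$$
Since the $E_N$ increase to $X$ up to a null set and $\mathcal B(h)\ge0$, monotone convergence yields $\int_X\mathcal B(h)\,dx\le|X|$; in particular $\mathcal B(h)\in L^1(X)$, which retroactively justifies the formal computation in the first paragraph. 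The routine matters I am passing over are the monotone (hence measurable) selection of $\tau$, the a.e.\ finiteness of $g$, and the identity $\overline{\sgn g}\,g=|g|$; the genuinely substantive step is the rescaling by $\theta_N$, which is exactly what compensates for the possible failure of $f$ to be admissible.
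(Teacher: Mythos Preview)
Your proof is correct and follows essentially the same strategy as the paper: both test the norm against the function realizing equality in Young's inequality (your $\tau$ is the paper's $\mathcal B'$), truncate, and use the convexity rescaling $\mathcal A(\theta t)\le\theta\mathcal A(t)$ to force admissibility. The only organizational difference is that the paper first proves $\int_X \mathcal A(\mathcal B'(|g|))\,dx\le|X|$ by contradiction (assuming $\alpha>|X|$ and rescaling to collide with a strict inequality) and then deduces the bound on $\int_X \mathcal B$, whereas you bypass that intermediate claim and bound $\int_{E_N}\mathcal B(h)\,dx\le|X|/\theta_N-\alpha_N\le|X|$ directly in one line; your route is slightly more economical but not substantively different.
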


\begin{proof}
As a preliminary remark we note that
\begin{equation}
\label{eq:legendre0}
s\mathcal B'(s) = \mathcal A(\mathcal B'(s)) + \mathcal B(s)
\qquad\text{for all}\ s\geq 0 \,,
\end{equation}
where here and in what follows we denote by $\mathcal B'$ the right sided derivative of $\mathcal B$ (which exists by convexity). In fact, by convexity $\mathcal B(\sigma) \geq \mathcal B(s) + \mathcal B'(s)(\sigma-s)$ for all $\sigma$ and therefore $s \mathcal B'(s) - \mathcal B(s) = \sup_\sigma \left( \sigma \mathcal B'(s) - \mathcal B(\sigma) \right)$. Since $\mathcal A$ is the Legendre transform of $\mathcal B$, we obtain \eqref{eq:legendre0}.

We now turn to the proof of the lemma. Clearly, we may assume that $\|g\|_{\mathcal B,X}=|X|$. Let $f := \overline{\sgn g} \mathcal B'(|g|)$. We shall show momentarily that 
\begin{equation}
\label{eq:sizea}
\int_X \mathcal A(|f|)\,dx\leq |X| \,.
\end{equation}
Because of this inequality and the definition of $\|g\|_{\mathcal B}$, we have
$$
|X| = \| g\|_{\mathcal B,X} \geq \left| \int_X f g\,dx \right|
= \int_X |f| |g|\,dx \,
$$
On the other hand, because of \eqref{eq:legendre0} we have
\begin{equation}
\label{eq:legendre}
|f| |g| = \mathcal A(|f|) + \mathcal B(|g|) \,,
\end{equation}
and therefore
$$
\int_X |f| |g|\,dx = \int_X \mathcal A(|f|)\,dx + \int_X \mathcal B(|g|)\,dx \geq \int_X \mathcal B(|g|)\,dx \,,
$$
which yields the inequality in the lemma.

It remains to prove \eqref{eq:sizea}. For $M>0$ let $f_M:=f \1_{\{|f|\leq M\}}$ and note that \eqref{eq:legendre} yields
$$
|f_M| |g| =\mathcal A(|f_M|) + \1_{\{|f|\leq M\}} \mathcal B(|g|) \geq \mathcal A(|f_M|) \,.
$$
We choose $M$ large enough such that $\1_{\{|f|\leq M\}} \mathcal B(|g|)$ does not vanish almost everywhere and obtain
\begin{equation}
\label{eq:luxemburgproof}
\int_X |f_M| |g| \,dx > \int_X \mathcal A(|f_M|)\,dx \,.
\end{equation}

We now show that $\alpha := \int_X \mathcal A(|f_M|)\,dx \leq |X|$, which implies \eqref{eq:sizea} by monotone convergence. We argue by contradiction and assume that $\alpha>|X|$. Note that $\alpha<\infty$ since $f_M$ is bounded. As in the previous proof, by convexity, we have $\mathcal A(|X|\,|f_M|/\alpha) \leq (|X|/\alpha) \mathcal A(|f_M|)$ and therefore $\int_X \mathcal A(|X|\,|f_M|/\alpha)\,dx \leq |X|$. By definition of $\|g\|_{\mathcal B,X}$,
$$
\int_X |f_M| |g| \,dx = \frac{\alpha}{|X|} \int_X \frac{|X|\, f_M}{\alpha} g \,dx \leq \frac{\alpha}{|X|} \|g\|_{\mathcal B} = \alpha \,.
$$
This contradicts \eqref{eq:luxemburgproof}, and therefore we obtain $\alpha\leq |X|$.
\end{proof}

\begin{lemma}\label{orliczbounded}
Assume that $\mathcal B$ satisfies
\begin{equation}
\label{eq:orliczdelta2}
\limsup_{t\to\infty} \frac{\mathcal B(2t)}{\mathcal B(t)}<\infty \,.
\end{equation}
Then $L^\infty(X)$ is dense in $L^\mathcal B(X)$.
\end{lemma}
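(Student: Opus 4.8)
The plan is the standard truncation argument. Given an arbitrary $g\in L^{\mathcal B}(X)$, with (say) $g\not\equiv0$, set $g_n:=g\,\1_{\{|g|\le n\}}$. Each $g_n$ is bounded by $n$, hence lies in $L^\infty(X)\subset L^{\mathcal B}(X)$ (the inclusion holds because $|X|<\infty$), and $g-g_n=g\,\1_{\{|g|>n\}}$. So it suffices to prove $\|g\,\1_{\{|g|>n\}}\|_{\mathcal B,X}\to0$ as $n\to\infty$.

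The first ingredient is a crude converse to Lemma \ref{luxemburg}: \emph{if $h\in L^{\mathcal B}(X)$ and $\lambda>0$ satisfy $\int_X\mathcal B(|X|\,|h|/\lambda)\,dx\le|X|$, then $\|h\|_{\mathcal B,X}\le2\lambda$.} This is immediate from Young's inequality $st\le\mathcal A(t)+\mathcal B(s)$, which is nothing but the definition of $\mathcal B$ as the Legendre transform of $\mathcal A$: for any admissible $f$, i.e. $\int_X\mathcal A(|f|)\,dx\le|X|$, one has the pointwise bound $|f|\,|h|\le\tfrac{\lambda}{|X|}\bigl(\mathcal A(|f|)+\mathcal B(|X|\,|h|/\lambda)\bigr)$, and integrating gives $\bigl|\int_X fh\,dx\bigr|\le2\lambda$; taking the supremum over $f$ yields the claim.

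The second ingredient is where \eqref{eq:orliczdelta2} enters. Put $c:=\|g\|_{\mathcal B,X}$; Lemma \ref{luxemburg} gives $\mathcal B(|X|\,|g|/c)\in L^1(X)$. From \eqref{eq:orliczdelta2} there are constants $L\ge1$ and $T>0$ with $\mathcal B(2t)\le L\,\mathcal B(t)+\mathcal B(2T)$ for all $t\ge0$ (for $t\ge T$ this is the defining inequality, for $t<T$ use monotonicity of $\mathcal B$), and iterating this $k$ times yields $\mathcal B(2^kt)\le a_k\,\mathcal B(t)+b_k$ with finite constants $a_k,b_k$. Since $|X|<\infty$, it follows that $\mathcal B\bigl(2^k\,|X|\,|g|/c\bigr)\in L^1(X)$ for every $k\in\N$. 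Now fix $\epsilon>0$, choose $k$ large enough that $2^{-k}c\le\epsilon$, and conclude by monotonicity that $\Phi:=\mathcal B(|X|\,|g|/\epsilon)\le\mathcal B\bigl(2^k\,|X|\,|g|/c\bigr)$, so $\Phi\in L^1(X)$.

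Finally, since $\mathcal B$ is nondecreasing with $\mathcal B(t)\to\infty$, Chebyshev's inequality gives $|\{|g|>n\}|\le\|\Phi\|_{L^1(X)}/\mathcal B(|X|n/\epsilon)\to0$, so by absolute continuity of the Lebesgue integral $\int_X\mathcal B\bigl(|X|\,|g\,\1_{\{|g|>n\}}|/\epsilon\bigr)\,dx=\int_{\{|g|>n\}}\Phi\,dx\to0$, which is $\le|X|$ for all large $n$. The first ingredient, applied with $h=g\,\1_{\{|g|>n\}}$ and $\lambda=\epsilon$, then gives $\|g-g_n\|_{\mathcal B,X}\le2\epsilon$ for all large $n$. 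As $\epsilon>0$ was arbitrary, $g_n\to g$ in $L^{\mathcal B}(X)$, proving the density. The only step that genuinely uses the hypothesis is the promotion of "$\mathcal B$ of some multiple of $|g|$ is integrable" to "$\mathcal B$ of every multiple of $|g|$ is integrable", i.e. the iterated $\Delta_2$ bound; the one mildly delicate point there is that $\mathcal B(0)=0$ forces the additive constants $b_k$, which is harmless because $|X|<\infty$.
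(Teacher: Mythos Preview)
Your proof is correct and takes essentially the same route as the paper's: truncate, invoke Lemma~\ref{luxemburg} for integrability of $\mathcal B$ applied to a fixed multiple of $|g|$, use the $\Delta_2$ condition to pass to arbitrary multiples, and close with Young's inequality. The only cosmetic difference is organizational---you package the Young step as a standalone ``converse to Lemma~\ref{luxemburg}'' and use $\Delta_2$ to upgrade integrability before estimating, whereas the paper keeps the scale fixed, first applies dominated convergence, and inserts $\Delta_2$ (in the equivalent form $\mathcal B(2^kt)\le C_{k,\epsilon}\mathcal B(t)+\epsilon$) directly into the final Young estimate.
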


In the theory of Orlicz spaces, \eqref{eq:orliczdelta2} is called $\Delta_2$ \emph{condition}. Note that the function $\mathcal B(t) = (1+|t|)\ln(1+|t|)-|t|$ satisfies this condition (while $\mathcal A(t)= e^{|t|}-1-|t|$ does not).

\begin{proof}
Let $g\in L^\mathcal B(X)$. We show that  $g_M := g \1_{\{|g|\leq M\}} \to g$ in $L^\mathcal B(X)$ as $M\to\infty$.

Let $\lambda:=\|g\|_{\mathcal B,X}/|X|$, so that $\int_X \mathcal B(|g|/\lambda)\,dx<\infty$ by Lemma \ref{luxemburg}. Moreover, $\mathcal B(|g_M-g|/\lambda)\leq \mathcal B(|g|/\lambda)$ and therefore, by dominated convergence, $\int_X \mathcal B(|g_M-g|/\lambda)\,dx \to 0$ as $M\to\infty$. It is easy to see that assumption \eqref{eq:orliczdelta2} implies that for any $\epsilon>0$ and $k\in\N$ there is a $C_{k,\epsilon}<\infty$ such that
$$
\mathcal B(2^k t) \leq C_{k,\epsilon} \mathcal B(t) + \epsilon 
\qquad\text{for all}\ t\geq 0 \,. 
$$
Let $f$ be a measurable function with $\int_X \mathcal A(|f|)\,dx\leq |X|$. Then by the definition of $\mathcal B$ as Legendre transform,
$$
|f| 2^k|g_M-g|/\lambda \leq \mathcal A(|f|) + \mathcal B(2^k |g_M-g|/\lambda) \leq \mathcal A(|f|) + C_{k,\epsilon} \mathcal B(|g_M-g|/\lambda) + \epsilon \,.
$$
Thus,
$$
\left| \int_X f 2^k (g_M-g)/\lambda \,dx \right| \leq (1+\epsilon) |X| + C_{k,\epsilon} \int_X \mathcal B(|g_M-g|/\lambda)\,dx
$$
and, taking the supremum over $f$,
$$
(2^k/\lambda) \|g_M-g\|_{\mathcal B,X} = \| 2^k (g_M-g)/\lambda \|_{\mathcal B,X} \leq (1+\epsilon) |X| + C_{k,\epsilon} \int_X \mathcal B(|g_M-g|/\lambda)\,dx \,.
$$
Letting $M\to\infty$ gives
$$
\limsup_{M\to\infty} \| g_M-g \|_{\mathcal B,X} \leq \lambda 2^{-k} (1+\epsilon) |X| \,,
$$
and letting $k\to\infty$ gives $g_M\to g$ in $L^\mathcal B(X)$.
\end{proof}

\begin{lemma}\label{orliczcont}
Assume that $\mathcal B$ satisfies \eqref{eq:orliczdelta2}. Let $g\in L^\mathcal B(X)$ and let $E_1,E_2,\ldots$ and $E$ be measurable subsets of $X$ with $|E_n\Delta E|\to 0$ as $n\to\infty$. Then, as $n\to\infty$,
$$
\|g\|_{\mathcal B,E_n} \to \|g\|_{\mathcal B,E} \,.
$$
\end{lemma}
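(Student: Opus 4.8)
The plan is to combine three ingredients: an \emph{absolute continuity} property of the Orlicz norm, the two-sided comparison of Lemma~\ref{orliczmono} which controls how $\|\cdot\|_{\mathcal B}$ depends on its normalizing measure, and the monotonicity \eqref{eq:orliczsubaddcor} together with the triangle inequality for the norm. We may assume $g\not\equiv 0$, since otherwise all quantities vanish. Since $\bigl| |E_n|-|E| \bigr|\le|E_n\Delta E|\to 0$ we have $|E_n|\to|E|$, and if $|E|=0$ the conclusion follows at once from the absolute continuity statement below (applied with $S_n=E_n$) together with \eqref{eq:orliczsubaddcor}, so we also assume $|E|>0$.

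\emph{Step 1 (absolute continuity).} We first show that $\|g\1_{S_n}\|_{\mathcal B,X}\to 0$ whenever $S_n\subset X$ are measurable with $|S_n|\to 0$; this is where the $\Delta_2$-condition \eqref{eq:orliczdelta2} enters, and the argument is the quantitative one already used in the proof of Lemma~\ref{orliczbounded}. Set $\lambda:=\|g\|_{\mathcal B,X}/|X|>0$. By Lemma~\ref{luxemburg} we have $\mathcal B(|g|/\lambda)\in L^1(X)$, so by absolute continuity of the Lebesgue integral $\int_{S_n}\mathcal B(|g|/\lambda)\,dx\to 0$. Fix $\epsilon>0$ and $k\in\N$ and, as in the proof of Lemma~\ref{orliczbounded}, choose $C_{k,\epsilon}<\infty$ with $\mathcal B(2^k t)\le C_{k,\epsilon}\mathcal B(t)+\epsilon$ for all $t\ge 0$. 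For any measurable $f$ with $\int_X\mathcal A(|f|)\,dx\le|X|$, the inequality $st\le\mathcal A(t)+\mathcal B(s)$ (definition of $\mathcal B$ as Legendre transform of $\mathcal A$) applied with $t=|f|$ and $s=2^k|g|\1_{S_n}/\lambda$, followed by integration over $X$, gives
$$
\frac{2^k}{\lambda}\int_X |f|\,|g|\,\1_{S_n}\,dx \;\le\; (1+\epsilon)|X| + C_{k,\epsilon}\int_{S_n}\mathcal B(|g|/\lambda)\,dx \,.
$$
Taking the supremum over such $f$ and then letting $n\to\infty$ gives $\limsup_n (2^k/\lambda)\|g\1_{S_n}\|_{\mathcal B,X}\le(1+\epsilon)|X|$, and letting $k\to\infty$ proves the claim.

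\emph{Step 2 (reduction and comparison).} Put $E_n':=E_n\cup E$ and $E_n'':=E_n\cap E$, so that $E_n''\subset E_n,E\subset E_n'$, that $E_n'\setminus E_n''=E_n\Delta E$ with $|E_n'\setminus E_n''|\to 0$, and that $|E_n''|,|E_n'|\to|E|>0$. By \eqref{eq:orliczsubaddcor},
$$
\|g\|_{\mathcal B,E_n''}\le\|g\|_{\mathcal B,E_n}\le\|g\|_{\mathcal B,E_n'}
\qquad\text{and}\qquad
\|g\|_{\mathcal B,E_n''}\le\|g\|_{\mathcal B,E}\le\|g\|_{\mathcal B,E_n'} \,,
$$
so it suffices to prove $\|g\|_{\mathcal B,E_n'}-\|g\|_{\mathcal B,E_n''}\to 0$. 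Decomposing $g\1_{E_n'}=g\1_{E_n''}+g\1_{E_n\Delta E}$ and using the triangle inequality for $\|\cdot\|_{\mathcal B,E_n'}$ we get
$$
\|g\|_{\mathcal B,E_n'}\le\|g\1_{E_n''}\|_{\mathcal B,E_n'}+\|g\1_{E_n\Delta E}\|_{\mathcal B,E_n'} \,.
$$
Since $g\1_{E_n''}$ vanishes off $E_n''$ and $|E_n''|\le|E_n'|$, Lemma~\ref{orliczmono} gives $\|g\1_{E_n''}\|_{\mathcal B,E_n'}\le(|E_n'|/|E_n''|)\,\|g\|_{\mathcal B,E_n''}$, while \eqref{eq:orliczsubaddcor} and Step~1 give $\|g\1_{E_n\Delta E}\|_{\mathcal B,E_n'}\le\|g\1_{E_n\Delta E}\|_{\mathcal B,X}\to 0$. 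Hence
$$
0\le\|g\|_{\mathcal B,E_n'}-\|g\|_{\mathcal B,E_n''}\le\Bigl(\frac{|E_n'|}{|E_n''|}-1\Bigr)\|g\|_{\mathcal B,E_n''}+\|g\1_{E_n\Delta E}\|_{\mathcal B,X} \,,
$$
and since $|E_n'|/|E_n''|\to 1$, $\|g\|_{\mathcal B,E_n''}\le\|g\|_{\mathcal B,X}<\infty$, and the last term tends to $0$, the right-hand side tends to $0$. This would complete the proof.

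The only genuine obstacle is Step~1: Step~2 consists of soft manipulations with the norm and its normalizing constant, whereas the absolute continuity of $\|\cdot\|_{\mathcal B}$ really requires the $\Delta_2$-condition and a quantitative estimate of the kind used for Lemma~\ref{orliczbounded}.
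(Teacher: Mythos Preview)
Your proof is correct. Both your argument and the paper's hinge on the same two pillars---Lemma~\ref{orliczmono} for comparing norms with different normalizing measures, and an absolute-continuity statement for $\|g\1_S\|_{\mathcal B}$ on small sets $S$---but they differ in how the latter is obtained. The paper first reduces to bounded $g$ via Lemma~\ref{orliczbounded}, and then controls the remainder using the explicit formula $\|\chi_F\|_{\mathcal B,X}=|F|\,\mathcal A^{-1}(|X|/|F|)$. You instead prove absolute continuity directly for general $g$, by re-running the $\Delta_2$/Young/Luxemburg estimate from the proof of Lemma~\ref{orliczbounded}, which is arguably cleaner since it avoids both the density reduction and the explicit computation. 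Your sandwiching via $E_n''=E_n\cap E\subset E_n,E\subset E_n\cup E=E_n'$ is also slightly tidier than the paper's three-term splitting $|\|g\|_{E_n}-\|g\|_E|\le|\|g\|_{E_n}-\|g_n\|_{E_n}|+|\|g_n\|_{E_n}-\|g_n\|_E|+|\|g_n\|_E-\|g\|_E|$ with $g_n=g\1_{E_n\cap E}$, though the content is the same. What the paper's route buys is the explicit value of $\|\chi_F\|_{\mathcal B,X}$, which is of independent interest; what your route buys is that you never pass through $L^\infty$.
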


\begin{proof}
We first claim that we may assume that $g$ is bounded. Indeed, from Lemma \ref{orliczbounded} we know for any $\epsilon>0$ there is an $g_\epsilon\in L^\infty(X)$ such that $\|g_\epsilon-g\|_{\mathcal B,X}\leq \epsilon$. Thus, by the triangle inequality and by \eqref{eq:orliczsubaddcor} we have for any measurable $F\subset X$, $| \|g_\epsilon\|_{\mathcal B,F} - \|g\|_{\mathcal B,F}| \leq \|g_\epsilon-g\|_{\mathcal B,F} \leq \|g_\epsilon-g\|_{\mathcal B,X}\leq \epsilon$. Applying this with $F=E_n$ and with $F=E$, we see that it is enough to prove the lemma for $g\in L^\infty(X)$, as claimed.

Let us define $g_n:=g\1_{E_n\cap E}$. Then
\begin{align*}
\left| \|g\|_{\mathcal B,E_n} - \|g\|_{\mathcal B,E} \right| 
\leq \left| \|g\|_{\mathcal B,E_n} - \|g_n\|_{\mathcal B,E_n} \right|
& + \left| \|g_n\|_{\mathcal B,E_n} - \|g_n\|_{\mathcal B,E} \right| \\
& + \left| \|g_n\|_{\mathcal B,E} - \|g\|_{\mathcal B,E} \right| \,.
\end{align*}
We have, using \eqref{eq:orliczsubaddcor},
\begin{align*}
\left| \|g_n\|_{\mathcal B,E_n} - \|g\|_{\mathcal B,E_n} \right| & \leq \| g_n-g\|_{\mathcal B,E_n} = \| g \chi_{E_n\setminus E} \|_{\mathcal B,E_n} \leq \|g\|_\infty \| \chi_{E_n\setminus E} \|_{\mathcal B,E_n} \\
& \leq \|g\|_\infty \| \chi_{E_n\setminus E} \|_{\mathcal B,X} \,.
\end{align*}
and similarly
\begin{align*}
\left| \|g_n\|_{\mathcal B,E} - \|g\|_{\mathcal B,E} \right| & \leq \| g_n-g\|_{\mathcal B,E} = \| g\chi_{E\setminus E_n} \|_{\mathcal B,E} \leq \|g\|_\infty \| \chi_{E\setminus E_n} \|_{\mathcal B,E} \\
& \leq \|g\|_\infty \| \chi_{E\setminus E_n} \|_{\mathcal B,X} \,.
\end{align*}
It is a simple exercise (using Jensen's inequality) to compute that for any measurable $F\subset X$,
$$
\| \chi_F \|_{\mathcal B,X} = |F|\ \mathcal A^{-1}(|X|/|F|) \,,
$$
where $\mathcal A^{-1}$ is the inverse function of $\mathcal A$; see \cite[Subsection II.9.3]{KrRu}. Since $\mathcal A(t)/t\to \infty$ as $t\to\infty$, we deduce that $\| \chi_F \|_{\mathcal B,X}\to 0$ as $|F|\to 0$. Thus, the assumption $|E_n\Delta E|\to 0$ implies that
$$
\left| \|g\|_{\mathcal B,E_n} - \|g_n\|_{\mathcal B,E_n} \right| + \left| \|g_n\|_{\mathcal B,E} - \|g\|_{\mathcal B,E} \right| \to 0
$$
as $n\to\infty$.

According to \eqref{eq:orliczsubaddcor} and Lemma \ref{orliczmono} we have
$$
\left| \|g_n\|_{\mathcal B,E_n} - \|g_n\|_{\mathcal B,E} \right| \leq \left( \frac{\max\{|E_n|,|E|\}}{\min\{|E_n|,|E|\}} - 1\right) \|g\|_{\mathcal B,X} \,.
$$
Again, since $|E_n\Delta E|\to 0$, we deduce that
$$
\|g_n\|_{\mathcal B,E_n} - \|g_n\|_{\mathcal B,E} \to 0
$$
as $n\to\infty$. This completes the proof.
\end{proof}


\section{The Besicovich lemma}\label{sec:besicovich}

In this section we state and prove a version of Besicovich's covering lemma. We follow the exposition in \cite{dG}, but since we get a better constant under an additional semi-continuity assumption (which is satisfied in our application), we include the details. We prove the result in general dimension $d$.

We recall that by a \emph{cube} we always mean an open cube with edges parallel to the coordinate axes, and by a \emph{covering} of a set $K\subset\R^d$ by cubes $Q_1,\ldots, Q_M$ we mean that $K\subset\bigcup_j Q_j$. The \emph{multiplicity} of such a covering is $\sup_{x\in \R^d}\#\{j: \ x\in Q_j \}$.

We denote $Q:=(-1/2,1/2)^d$, so that $a+ lQ$ is the cube centered at $a\in\R^d$ with side length $l>0$.

\begin{proposition}\label{ch3:besicovitch}
Let $d\geq 1$, let $K\subset\R^d$ be a compact set and let $l$ be a positive, upper semi-continuous function on $K$. Then there is a (finite or infinite) sequence $(x_j)\subset K$ such that the cubes $Q_j=x_j+l(x_j) Q$, $j=1,2,3,\ldots,$ are a covering of $K$ with multiplicity at most $2^d$. Moreover, the sequence $(Q_j)$ can be divided into $4^d+1$ sequences $\Xi^k=(Q^{k}_j)$ such that for any $1\leq k\leq 4^d+1$, the cubes in $\Xi^k$ are disjoint (that is, $Q_{j_1}^k \cap Q_{j_2}^k =\emptyset$ if $j_1\neq j_2$).
\end{proposition}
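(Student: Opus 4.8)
The plan is to run the standard Besicovich selection algorithm on the family of cubes $\{x + l(x)Q : x \in K\}$, using upper semi-continuity of $l$ to make the selection well-defined, and then to carry out the colouring argument that sorts the selected cubes into families of pairwise disjoint cubes, tracking the two combinatorial constants ($2^d$ for the multiplicity, $4^d+1$ for the number of colours).

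First I would set up the selection. Define $\sigma_0 := \sup_{x \in K} l(x)$, which is finite since $K$ is compact and $l$ is upper semi-continuous (an upper semi-continuous function on a compact set attains its maximum, in particular is bounded). Choose $x_1 \in K$ with $l(x_1)$ maximal, set $Q_1 := x_1 + l(x_1)Q$. Inductively, having chosen $x_1,\dots,x_{j-1}$, let $K_j := K \setminus \bigcup_{i<j} Q_i$; if $K_j = \emptyset$ stop, otherwise choose $x_j \in K_j$ with $l(x_j) \geq \tfrac12 \sup_{x \in K_j} l(x)$ (the factor $\tfrac12$ is a harmless safeguard; with the attainment property one can even take the sup). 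By construction $x_j \notin Q_i$ for $i < j$, so $|x_j - x_i|_\infty \geq \tfrac12 l(x_i)$, hence the cubes $\tfrac12 l(x_i)^{-1}(Q_i - x_i)$-scaled copies — more precisely the cubes $x_i + \tfrac13 l(x_i) Q$ — are pairwise disjoint. Since all the $x_i$ lie in the bounded set $K$ and the side lengths $l(x_i)$ are bounded below on any finite stretch of the construction by a comparison with the (non-increasing up to the factor $2$) quantity $\sup_{x\in K_j} l(x)$, a volume-packing argument shows the construction produces a sequence that covers $K$: if some $y \in K$ were never covered, then $y \in K_j$ for all $j$, forcing $l(x_j) \geq \tfrac12 l(y) > 0$ for all $j$, contradicting disjointness of infinitely many disjoint cubes of volume $\geq (l(y)/3)^d$ inside a bounded region. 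This gives the covering claim.

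Next, the multiplicity bound. Suppose $x \in \bigcap_{i \in I} Q_i$ for some index set $I$. Split $I = I^- \cup I^+$ according to whether $l(x_i) \leq 2\,\dist(x,\partial?)$ — the cleaner split used in \cite{dG} is: for $i \in I$, either $l(x_i) \le \rho$ or $l(x_i) > \rho$ for a suitable threshold. The key geometric facts are (i) for $i,i' \in I$ with $i < i'$ one has $x_{i'} \notin Q_i$, so the centres are separated relative to the larger side length, and (ii) all the $x_i$, $i\in I$, lie within $\ell_\infty$-distance $\tfrac12 l(x_i)$ of the common point $x$. A comparison-of-sidelengths argument (using that $l(x_{i'}) \le 2 l(x_i)$ when $i<i'$, since $x_{i'}\in K_{i'}\subset K_i$ and $l(x_i)\ge \tfrac12\sup_{K_i}l$) shows that the cubes $x_i + \tfrac13 l(x_i)Q$, $i \in I$, are disjoint and all contained in a fixed cube centred at $x$ of side length comparable to $\min_{i\in I} l(x_i)$ — actually of side $\le$ (something)$\cdot \min_i l(x_i)$, with each having side $\ge \tfrac{?}{}\min_i l(x_i)$ — and a volume count yields $\#I \le 2^d$. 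Getting the constant exactly $2^d$ (rather than a larger universal constant) is where one must be careful with the $\ell_\infty$ geometry of cubes; I would exploit that cubes with a common point and comparable sizes, whose centres are mutually separated in $\ell_\infty$, are constrained coordinatewise, essentially reducing to the $d=1$ count of $2$ and taking the $d$-fold product.

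Finally, the colouring. Order the selected cubes $Q_1, Q_2, \dots$ and colour them greedily: assign to $Q_j$ the smallest colour $k \ge 1$ not already used by any $Q_i$ with $i < j$ and $Q_i \cap Q_j \ne \emptyset$. To bound the number of colours, it suffices to show that for each $j$ the number of indices $i < j$ with $Q_i \cap Q_j \ne \emptyset$ is at most $4^d$. So fix $j$ and let $J := \{ i < j : Q_i \cap Q_j \ne \emptyset\}$. For $i \in J$ we have $i < j$, hence (as above) $l(x_i) \ge \tfrac12 l(x_j)$, i.e. the earlier cubes are not much smaller than $Q_j$; also $x_j \notin Q_i$. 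One then splits $J$ into those $i$ with $l(x_i) \le 2 l(x_j)$ and those with $l(x_i) > 2 l(x_j)$, or rather does a single volume-packing estimate: the cubes $\{x_i + \tfrac13 l(x_i)Q : i \in J\}$ are pairwise disjoint (by the non-containment of later centres in earlier cubes, applied among the $i\in J$ themselves), each has side $\ge \tfrac16 l(x_j)$, and each is contained in a cube concentric with $Q_j$ of side $\le$ (const)$\cdot l(x_j)$ — since $Q_i$ meets $Q_j$ and has side $l(x_i)$, which we bound above by the downward-comparison of side lengths — so dividing volumes gives $\#J \le 4^d$, hence at most $4^d$ forbidden colours and at most $4^d+1$ colours in total. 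The main obstacle throughout is pinning down the two constants $2^d$ and $4^d+1$ precisely: both come down to careful $\ell_\infty$ packing estimates exploiting that later-selected cubes are never much larger and that their centres avoid earlier cubes, and one has to set the safeguard constant in the selection step (and the concentric comparison cubes) so that the volume ratios come out to exactly these values rather than merely $O(1)$. Following \cite{dG} and using the semi-continuity to take exact suprema in the selection is what sharpens the generic Besicovich constant to $4^d+1$.
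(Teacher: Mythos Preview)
Your outline follows the same global scheme as the paper (greedy selection of centres, then a covering/multiplicity argument, then a greedy colouring), but the two places where the exact constants $2^d$ and $4^d+1$ have to be produced are genuinely incomplete, and the volume-packing arguments you sketch do not give them.

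For the multiplicity bound you want the cubes $x_i+\tfrac13 l(x_i)Q$, $i\in I$, to be disjoint and contained in a cube centred at $x$ of side comparable to $\min_{i\in I}l(x_i)$. The containment fails: nothing prevents the \emph{earlier} cubes among the $Q_i$ containing $x$ from being arbitrarily large compared to the later ones, so no enclosing cube of the claimed size exists and the volume count does not yield $2^d$. The paper avoids packing here entirely. It takes \emph{exact} maxima in the selection (no $\tfrac12$ safeguard), which gives the two-sided fact $x_i\notin Q_j$ for all $i\neq j$; then, splitting $\R^d$ into the $2^d$ closed hyper-quadrants through $x$, it shows that each quadrant can contain the centre of at most one $Q_j$ with $x\in Q_j$ (otherwise the centre with the smaller $|\cdot-x|_\infty$ would lie inside the other cube, contradicting the two-sided non-containment). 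This gives multiplicity $\le 2^d$ directly, without any volume comparison.

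For the colouring you face the same obstruction: for $i\in J=\{i<j:\ Q_i\cap Q_j\neq\emptyset\}$ you only know $l(x_i)\ge l(x_j)$ (or $\ge\tfrac12 l(x_j)$ with your safeguard), not any upper bound on $l(x_i)$, so again the $\tfrac13$-cubes are not contained in a cube of side $O(l(x_j))$ and your volume division does not give $4^d$. The paper's argument is different and short: since $l(x_i)\ge l(x_j)$ for $i<j$, any $Q_i$ meeting $Q_j$ must contain at least one of the $2^d$ vertices of $Q_j$; by the multiplicity bound each vertex lies in at most $2^d$ of the cubes, hence $\#J\le 2^d\cdot 2^d=4^d$, and greedy colouring needs at most $4^d+1$ colours.

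In short, drop the $\tfrac12$ safeguard (use that $l$, being u.s.c.\ on the compact sets $K\setminus\bigcup_{i<j}Q_i$, attains its maximum) to get $x_i\notin Q_j$ for all $i\neq j$, replace your volume-packing for multiplicity by the hyper-quadrant argument, and replace your volume-packing for the colouring by the vertex-containment argument leveraging the multiplicity bound you just proved.
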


In the proof we use the notation $|x|_\infty =\max\{|x^{(j)}|: 1\leq j\leq d\}$ for $x=(x^{(1)},\ldots,x^{(d)})\in\R^d$.

\begin{proof}
Since the function $l$ is upper semi-continuous on the compact set $K$, it attains its maximum at some point $x_1\in K$. Now assume that for some $m\in\N$, the points $x_1,\ldots,x_m$ have already been chosen. If $K\setminus \bigcup_{j=1}^m Q_j =\emptyset$, then the selection process is finished. Otherwise, we take $x_{m+1}\in K\setminus \bigcup_{j=1}^m Q_j$ such that the maximum of $l$ over the compact $K\setminus \bigcup_{j=1}^m Q_j$ is attained at $x_{m+1}$. This procedure leads to a finite or infinite sequence of points $x_j$. Let us show that they have all the required properties.

We claim that
\begin{equation}
\label{eq:besiproof1}
\left( x_i + \frac{l(x_i)}2 Q\right) \cap \left( x_j + \frac{l(x_j)}2 Q \right) =\emptyset
\qquad\text{if}\ i\neq j \,.
\end{equation}
Indeed, to show this we may assume that $i<j$. Then, by construction $x_j \not\in Q_i$ and therefore $|x_i-x_j|_\infty \geq l(x_i)/2$. By construction we have $l(x_i)\geq l(x_j)$ and therefore $|x_i-x_j|_\infty \geq (l(x_i)+l(x_j))/4$. This implies \eqref{eq:besiproof1}.

We also claim that
\begin{equation}
\label{eq:besiproof3}
x_i\not\in Q_j
\qquad\text{if}\ i\neq j \,.
\end{equation}
Indeed, this is clear from the construction if $i>j$. On the other hand, if $i<j$, then, again from the construction, we have $l(x_i)\geq l(x_j)$ and $x_j\not\in Q_i$, which implies that $|x_j-x_i|_\infty\geq l(x_i)/2\geq l(x_j)/2$. Thus, \eqref{eq:besiproof3} also holds in this case.

Using the compactness of $K$, one easily deduces from \eqref{eq:besiproof1} that, if the sequence $(Q_j)$ is infinite, then
\begin{equation}
\label{eq:besiproof2}
l(x_j)\to 0
\qquad\text{as}\ j\to \infty \,.
\end{equation}

We now prove that $(Q_j)$ covers $K$. This is clear from the construction if the sequence $(Q_j)$ is finite. So we may assume that it is infinite. We argue by contradiction and assume that there is an $x\in K \setminus \bigcup_j Q_j$. Then, because of \eqref{eq:besiproof2}, there is a $j$ such that $l(x_j)<l(x)$. This, however, contradicts the construction of $x_j$.

Next, we show that the multiplicity of the covering is at most $2^d$, that is, any point $x\in K$ belongs to at most $2^d$ of the cubes $Q_j$. To do this, we divide $\R^d$ into $2^d$ hyper-quadrants with boundaries passing through $x$ and parallel to the $d$ coordinate hyper-planes. It suffices to show that in each closed hyper-quadrant there is at most one point $x_j$ such that $x\in Q_j$. We argue by contradiction and assume that there are two distinct points $x_i$ and $x_j$ in the same closed hyper-quadrant with $x\in Q_i\cap Q_j$. We may assume that $|x_j-x|_\infty\geq |x_i-x|_\infty$. Since $x\in Q_j$, the set of all points $y$ in the same hyper-quadrant as $x_j$ satisfying $|y-x|_\infty \leq |x_j-x|_\infty$ is contained in $Q_j$. In particular, we have $x_i\in Q_j$. This contradicts \eqref{eq:besiproof3}.

Finally, we have to rearrange the sequence into $4^d+1$ disjoint sequences. We first claim that for any $j$ there are at most $4^d$ cubes among the cubes $Q_1,\ldots,Q_{j-1}$ which have non-empty intersection with $Q_j$. To see this, note that if $k<j$ and if $Q_k\cap Q_j \neq\emptyset$, then $Q_k$ contains at least one of the $2^d$ vertices of $Q_j$. (This follows from the fact that $l(x_k)\geq l(x_j)$ for $k<j$.) However, by the bound on the multiplicity, any fixed vertex of $Q_j$ is contained in at most $2^d$ cubes. Thus, there are at most $2^d \times 2^d$ cubes $Q_k$ with $k<j$ which have non-empty intersection with $Q_j$.

We now use this fact to decompose our sequence. We are going to define inductively $r=4^d+1$ sequences $\Xi^1,\ldots, \Xi^r$ of cubes. To start, we set $Q_j\in\Xi^j$ for $j=1,\ldots, r$. Now let $j\geq r+1$ and assume that the families $\Xi^1,\ldots,\Xi^r$ contain all the cubes $Q_1,\ldots, Q_{j-1}$ and that each $\Xi^k$ consists of disjoint cubes. By the above fact, $Q_{j}$ can intersect at most $r-1$ cubes among the cubes $Q_1,\ldots,Q_{j-1}$. Since there are $r$ families of cubes in total, there must be a $k\in\{1,\ldots,r\}$ such that $Q_{j}$ does not intersect any of the cubes in $\Xi^k$. We put $Q_{j}\in\Xi^k$. This defines inductively the claimed partitioning of the $Q_j$. The proof of the proposition is complete.
\end{proof}


\bibliographystyle{amsalpha}

\end{document}